\newtheorem{theorem}{Theorem}[section]
\newtheorem{proposition}[theorem]{Proposition}
\newtheorem{lemma}[theorem]{Lemma}
\theoremstyle{definition}
\newtheorem{definition}[theorem]{Definition}
\newtheorem{example}[theorem]{Example}
\newtheorem{remark}[theorem]{Remark}
\author{Takuya Sakasai}
\address{Graduate School of Mathematical Sciences, 
The University of Tokyo, 
3-8-1 Komaba Meguro-ku Tokyo 153-8914, Japan}
\email{sakasai@ms.u-tokyo.ac.jp}
\subjclass[2000]{Primary~20F34, Secondary~20F28; 57M05}
\keywords{homology cylinder; Magnus representation; acyclic closure}
\newcommand{\Ker}{\mathop{\mathrm{Ker}}\nolimits}
\newcommand{\Hom}{\mathop{\mathrm{Hom}}\nolimits}
\newcommand{\Id}{\mathop{\mathrm{id}}\nolimits}
\newcommand{\Z}{\ensuremath{\mathbb{Z}}}
\newcommand{\Tmatrix}[1]{\mathop{\left( {#1} \right)}\nolimits}
\newcommand{\Aut}{\mathop{\mathrm{Aut}}\nolimits}
\newcommand{\Cg}{\ensuremath{\mathcal{C}_{g,1}}}
\newcommand{\Acy}{F^{\mathrm{acy}}}
\newcommand{\Alg}{F^{\mathrm{alg}}}
\newcommand{\AC}[1]{#1^{\mathrm{acy}}}
\title[The Magnus representation and homology cylinders]
{The Magnus representation and homology cobordism 
groups of homology cylinders}
\begin{document}

\begin{abstract}
A {\it homology cylinder\/} over a compact manifold is a homology cobordism between 
two copies of the manifold together with a boundary parametrization. 
We study abelian quotients of the homology cobordism group of homology cylinders. 
For homology cylinders over general surfaces, it was shown by 
Cha, Friedl and Kim that their homology cobordism groups 
have infinitely generated abelian quotient groups by using Reidemeister torsion invariants. 
In this paper, we first investigate their abelian quotients again by using 
another invariant called the Magnus representation. After that, we apply 
the machinery obtained from the Magnus representation to higher dimensional cases and show that 
the homology cobordism groups of homology cylinders 
over a certain series of manifolds regarded as a generalization of surfaces have big abelian quotients. 
In the proof, a homological localization, called 
the {\it acyclic closure}, of a free group and its 
automorphism group play important roles and 
our result also provides some information on these groups 
from a group-theoretical point of view. 
\end{abstract}
% About 165 words

\maketitle

%----------Introduction--------------

\section{Introduction}\label{sec:intro}
Let $\Sigma_{g,1}$ be a compact connected oriented surface of genus $g$ 
with one boundary component. 
A {\it homology cylinder} over $\Sigma_{g,1}$ 
is a homology cobordism between two copies of $\Sigma_{g,1}$ 
together with a boundary parametrization. 
The set of all isomorphism classes of 
homology cylinders has a natural product operation given by 
stacking, so that it forms a monoid denoted by $\mathcal{C}_{g,1}$. 
The study of the monoid $\mathcal{C}_{g,1}$ was initiated by 
Goussarov \cite{gou} and Habiro \cite{habiro} in their theory 
of clasper surgery and finite type invariants of 3-dimensional manifolds. 
In their study, a quotient group of the monoid by clasper surgery equivalence 
was introduced and 
its structure was intensively clarified in Massuyeau-Meilhan \cite{mm1,mm2} and 
Habiro-Massuyeau \cite{hm}. 

On the other hand,  Garoufalidis and Levine \cite{gl} introduced another 
equivalence relation by considering 
homology cobordisms of homology cylinders and they defined the 
quotient group $\mathcal{H}_{g,1}$ called 
the {\it homology cobordism group} of homology cylinders. 
Investigating the structure of the group $\mathcal{H}_{g,1}$ is 
considered to be important because 
it provides an efficient way of understanding the set of homology cobordism classes of 
3-dimensional manifolds as the braid group contributes to 
knot theory. In this paper, we focus on this group $\mathcal{H}_{g,1}$ 
as well as its generalization to higher dimensional cases. 

Another motivation for studying the monoid and 
groups of homology cylinders comes from the 
fact that they include the mapping class group $\mathcal{M}_{g,1}$ of $\Sigma_{g,1}$. 
They share many properties. 
For example, Garoufalidis-Levine \cite{gl}, Levine \cite{levine} 
and Habegger \cite{habe} gave a deep relationship to the theory of 
Johnson homomorphisms used originally 
in the study of $\mathcal{M}_{g,1}$ and its subgroups.  
In pursuing more relationships between 
$\mathcal{M}_{g,1}$ and $\mathcal{H}_{g,1}$, 
it should be an important step to determine and 
compare their abelianizations. As for $\mathcal{M}_{g,1}$, 
it was first shown by Harer \cite{harer} that 
the abelianization is trivial (namely, the group $\mathcal{M}_{g,1}$ is perfect) 
except a few low genus cases. On the other hand, the abelianization of $\mathcal{H}_{g,1}$ 
has not yet been determined (see Section \ref{sec:abelian} for details). 

In our previous paper \cite{sakasai08}, we introduced two kinds 
of invariants of homology cylinders, the Magnus representation and 
the Reidemeister torsion. 
Both invariants are crossed homomorphisms from the monoid $\mathcal{C}_{g,1}$ 
to some groups of matrices. It was observed that 
the Magnus representation factors through $\mathcal{H}_{g,1}$, 
while the Reidemeister torsion does not so. 
In the same paper, we found many abelian quotients of {\it sub}monoids of $\mathcal{C}_{g,1}$ and 
{\it sub}groups of $\mathcal{H}_{g,1}$. However 
no information was extracted on abelian quotients of the whole monoid and group.  
It had been conjectured that $\mathcal{H}_{g,1}$ was perfect for general $g$ 
as in the case of $\mathcal{M}_{g,1}$. 

After that, however, Cha, Friedl and Kim \cite{cfk} 
succeeded in showing that the abelianization of $\mathcal{H}_{g,1}$ is infinitely generated 
by using a version of the Reidemeister torsion which we will call the {\it $H$-torsion\/} in this paper. 
In fact, they took an appropriate reduction of the torsion invariant so that the 
resulting map becomes a homology cobordism invariant homomorphism. 

The purpose of the first half of this paper is to use (the determinant of) the Magnus 
representation together with 
Cha-Friedl-Kim's reduction technique to investigate again abelian quotients of $\mathcal{H}_{g,1}$. 
In Section \ref{sec:abelian}, we will show that there exists a relationship between 
the invariant obtained from the Magnus representation and Cha-Friedl-Kim's torsion invariant.  

Our invariant using the Magnus representation can be easily applied to 
homology cylinders over higher dimensional manifolds, 
which seem to have their own interest. 
Investigating this invariant is the purpose of the second half. 
The main theorem (Theorem \ref{thm:acyMag}) is that 
the homology cobordism groups of 
homology cylinders over a certain series of manifolds regarded as a higher dimensional 
generalization of surfaces have 
abelian quotients isomorphic to the free abelian group of infinite rank. 
For the proof, we use a purely group-theoretical description of 
the Magnus representation 
as the representation of the automorphism group of 
the {\it acyclic closure} of a free group. 
This description was first given by Le Dimet \cite{ld} 
in the context of string links, and then 
we clarified its relationship to homology cylinders in our previous paper \cite{sa2}. 
The definition of the acyclic closure, which is originally due to 
Levine \cite{le1, le2}, and its 
fundamental properties are reviewed
in Section \ref{sec:acyclic}. We prove the theorem by constructing first 
an epimorphism from the homology cobordism group onto 
the automorphism group of the acyclic closure of a free group  (Theorem \ref{thm:acysurj})
and then showing that this automorphism group has an abelian quotient isomorphic to 
the free abelian group of infinite rank (Theorem \ref{thm:H1Acy}). 

All manifolds are assumed to be smooth 
throughout this paper, while 
similar statements hold for other categories. 
We use the same notation to write a continuous map and the 
induced homomorphisms on fundamental groups and homology groups. 
Also, all homology groups are with $\mathbb{Z}$-coefficients.

The author would like to thank Yasushi Kasahara and 
Gw\'ena\"el Massuyeau for helpful comments and discussions. 
This research was partially supported by JSPS KAKENHI 
(No.~21740044 and No.~24740040), 
Japan Society for the Promotion of Science, Japan. 
%Ministry of Education, Science, Sports and Technology, Japan.

%----------Homology cylinders over a manifold--------------

\section{Homology cylinders over a manifold}\label{sec:HCgeneral}

We begin by giving the definition of homology cylinders over a manifold. 
Originally, homology cylinders are introduced and studied for surfaces by 
Goussarov \cite{gou}, Habiro \cite{habiro}, 
Garoufalidis and Levine \cite{gl, levine}. 
The definition below is a natural generalization to it. 

Let $X$ be a compact oriented connected $k$-dimensional manifold. 
We assume for simplicity 
that the boundary $\partial X$ of $X$ is connected or empty. 

\begin{definition}\label{def:HCgeneral}
A {\it homology cylinder\/} {\it over} $X$   
consists of a compact oriented $(k+1)$-dimensional manifold $M$ 
with two embeddings $i_{+}, i_{-}: X \hookrightarrow \partial M$ 
such that:
\begin{enumerate}
\renewcommand{\labelenumi}{(\roman{enumi})}
\item
$i_{+}$ is orientation-preserving and $i_{-}$ is orientation-reversing, 
\item 
$\partial M=i_{+}(X)\cup i_{-}(X)$ and 
$i_{+}(X)\cap i_{-}(X)
=i_{+}(\partial X)=i_{-}(\partial X)$,
\item
$i_{+}|_{\partial X}=i_{-}|_{\partial X}$, 
\item
$i_{+},i_{-} : H_{*}(X)\to H_{*}(M)$ are isomorphisms. 
\end{enumerate}
We denote a homology cylinder by $(M,i_{+},i_{-})$ or simply $M$. 
The boundary $\partial M$ of $M$ is the double of $X$ if $\partial X \neq \emptyset$. Otherwise it is 
the disjoint union of two copies of $X$.  
\end{definition}

Two homology cylinders $(M,i_+,i_-)$ and $(N,j_+,j_-)$ over $X$ 
are said to be {\it isomorphic} if there exists 
an orientation-preserving diffeomorphism $f:M \xrightarrow{\cong} N$ 
satisfying $j_+ = f \circ i_+$ and $j_- = f \circ i_-$. 
We denote by $\mathcal{C} (X)$ the set of all isomorphism classes 
of homology cylinders over $X$. 
We define a product operation on $X$ by 
\[(M,i_+,i_-) \cdot (N,j_+,j_-)
:=(M \cup_{i_- \circ (j_+)^{-1}} N, i_+,j_-)\]
for $(M,i_+,i_-)$, $(N,j_+,j_-) \in \mathcal{C} (X)$, 
which endows $\mathcal{C} (X)$ with a monoid 
\index{monoid of homology cylinders} structure. 
The unit is the trivial homology cylinder 
$(X \times [0,1], \mathrm{id} \times 1, \mathrm{id} \times 0)$, 
where collars of $i_+ (X)=(\mathrm{id} \times 1) (X)$ 
and $i_- (X)=(\mathrm{id} \times 0)(X)$ 
are stretched half-way along $(\partial X) \times [0,1]$ 
so that $i_+ (\partial X)=i_- (\partial X)$. 

\begin{example}\label{ex:diffeotopy}
For a self-diffeomorphism $\varphi$ of 
$X$ which restricts to the identity map on a neighborhood of 
$\partial X$, we can construct a homology cylinder by setting 
\[(X \times [0,1], \mathrm{id} \times 1, \varphi \times 0)\]
with the same treatment of the boundary as above. 
It is easily checked that the isomorphism class of 
$(X \times [0,1], \mathrm{id} \times 1, \varphi \times 0)$ 
depends only on the isotopy (fixing a neighborhood of $\partial X$ pointwise) 
class of $\varphi$ and that 
this construction gives a monoid homomorphism 
from the diffeotopy group $\mathcal{M} (X)$ 
to $\mathcal{C} (X)$. 
\end{example}
\begin{remark}
The homomorphism $\mathcal{M} (X) \to \mathcal{C} (X)$ 
is {\it not} necessarily injective. 
In fact, if $[\varphi] \in \mathrm{Ker}\,(\mathcal{M} (X) \to \mathcal{C} (X))$, 
the definition of the homomorphism only says that 
$\varphi$ is a {\it pseudo isotopy} over $X$. 
\end{remark}
We also introduce {\it homology cobordisms} 
of homology cylinders, which define an equivalence relation 
among homology cylinders. 
\begin{definition}
Two homology cylinders $(M,i_+,i_-)$ and $(N,i_+,i_-)$ over 
$X$ are said to be {\it homology cobordant} 
if there exists a compact oriented $(k+2)$-dimensional manifold $W$ such that: 
\begin{enumerate}
\item $\partial W = M \cup (-N) /(i_+ (x)= j_+(x) , \,
i_- (x)=j_-(x)) \quad x \in X$, 
\item the inclusions $M \hookrightarrow W$, $N \hookrightarrow W$ 
induce isomorphisms on the homology group,  
\end{enumerate}
where $-N$ denotes the manifold $N$ with the opposite orientation.  
\end{definition}
\noindent
We denote by $\mathcal{H} (X)$ 
the quotient set of $\mathcal{C} (X)$ with respect 
to the equivalence relation of homology cobordism. 
The monoid structure of $\mathcal{C} (X)$ induces 
a group structure of $\mathcal{H} (X)$. 
We call $\mathcal{H} (X)$ the {\it homology cobordism group} of 
homology cylinders over $X$.

%----------Homology cylinders over a surface--------------

\section{Homology cylinders over a surface}\label{sec:HC}
Let $\Sigma_{g,1}$ be a compact oriented surface of genus $g$ 
with one boundary component. 
We take a base point $p$ of $\Sigma_{g,1}$ on the boundary 
$\partial \Sigma_{g,1}$ and $2g$ oriented loops 
$\gamma_1, \gamma_2, \ldots, \gamma_{2g}$ 
as in Figure \ref{fig:generator}. These loops form 
a spine $R_{2g}$ of 
$\Sigma_{g,1}$ and they give 
a basis of $\pi_1 (\Sigma_{g,1})$, a free group of rank $2g$. 
The boundary loop $\zeta$ is given by 
$\zeta=[\gamma_1, \gamma_{g+1}][\gamma_2, \gamma_{g+2}] \cdots 
[\gamma_g, \gamma_{2g}]$. 
We denote the first homology group $H_1 (\Sigma_{g,1})$ by $H$ for simplicity. 
The group $H$ can be identified with $\mathbb{Z}^{2g}$ by choosing 
$\{\gamma_1, \gamma_2, \ldots, \gamma_{2g}\}$ as a basis of $H$, where 
we write $\gamma_j$ again for $\gamma_j$ as an element of $H$. 
This basis is a symplectic basis with respect to the intersection pairing 
on $H$. 

\begin{figure}[htbp]
\begin{center}
\includegraphics{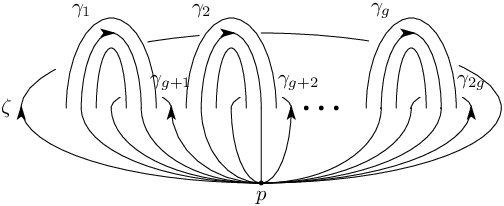}
\end{center}
\caption{Our basis of $\pi_1 (\Sigma_{g,1})$}
\label{fig:generator}
\end{figure}

We use the notation $\mathcal{M}_{g,1}:=\mathcal{M} (\Sigma_{g,1})$, 
$\mathcal{C}_{g,1}:=\mathcal{C} (\Sigma_{g,1})$ and $\mathcal{H}_{g,1}:=\mathcal{H} (\Sigma_{g,1})$ 
following our previous papers. The diffeotopy group $\mathcal{M}_{g,1}$ is also called the {\it mapping class group\/} 
of $\Sigma_{g,1}$. It was shown by Garoufalidis-Levine \cite[Section 2.4]{gl} that 
the homomorphism $\mathcal{M}_{g,1} \to \mathcal{C}_{g,1}$ and the composition 
$\mathcal{M}_{g,1} \to \mathcal{C}_{g,1} \to \mathcal{H}_{g,1}$ are injective. 

\begin{example}[Levine \cite{levine}]\label{ex:stringlink}
Let $L$ be a string link of $g$ strings, 
which is a generalization of a pure braid. 
We embed a $g$-holed disk $D_g^2$ into $\Sigma_{g,1}$ as a closed 
regular neighborhood of the union of the 
loops $\gamma_{g+1}, \gamma_{g+2}, \ldots, \gamma_{2g}$ in Figure \ref{fig:generator}. 
Let $C$ be the complement of an open tubular neighborhood of $L$ in 
$D^2 \times [0,1]$. By choosing a framing of $L$, 
we can fix a diffeomorphism $h:\partial C \xrightarrow{\cong} 
\partial (D_g^2 \times [0,1])$. 
Then the manifold $M_L$ obtained from $\Sigma_{g,1} \times [0,1]$ 
by removing $D_g^2 \times [0,1]$ and regluing 
$C$ by $h$ becomes a homology cylinder with the same boundary parametrizations 
$i_+$, $i_-$ as the trivial homology cylinder. 
\end{example}

The monoid $\mathcal{C}_{g,1}$ and the group $\mathcal{H}_{g,1}$ 
share many properties with the group $\mathcal{M}_{g,1}$. 
The most fundamental one is given by their action on $H$. 
Define a map 
\[\sigma: \mathcal{C}_{g,1} \longrightarrow \mathrm{Aut}\,(H)\]
by assigning to $(M,i_+,i_-) \in \mathcal{C}_{g,1}$ the 
automorphism $i_+^{-1} \circ i_-$ of $H$. 
This map extends the natural action of $\mathcal{M}_{g,1}$ 
on $H$ and it is a monoid homomorphism. 
The image of $\varphi$ consists of 
the automorphisms of $H$ preserving 
the intersection pairing. 
Therefore, under the identification 
$H \cong \mathbb{Z}^{2g}$ mentioned above, 
we have an epimorphism 
\[\sigma: \mathcal{C}_{g,1} \longrightarrow 
\mathrm{Sp}(2g, \mathbb{Z}).\]
We put $\mathcal{IC}_{g,1}:=\mathrm{Ker}\,\sigma$, which is 
an analogue of the Torelli group 
$\mathcal{I}_{g,1}=\mathrm{Ker} (\sigma: \mathcal{M}_{g,1} \to 
\mathrm{Sp}(2g, \mathbb{Z}))$. We can see that $\sigma$ induces a 
group homomorphism $\sigma:\mathcal{H}_{g,1} \to \mathrm{Sp}(2g, \mathbb{Z})$ 
and we denote its kernel by $\mathcal{IH}_{g,1}$.

%----------Magnus representation and $H$-torsion for homology cylinders--------------

\section{Magnus representation and $H$-torsion for homology cylinders}
Here, we recall two kinds of invariants for homology cylinders 
from \cite{gs08, sakasai08}, which are analogous to invariants 
for string links defined by Le Dimet \cite{ld} and 
Kirk-Livingston-Wang \cite{klw}. 

Since the group $H=H_1 (\Sigma_{g,1})$ is free abelian, 
the group ring $\mathbb{Z} [H]$ is isomorphic to the Laurent polynomial ring of 
variables $\gamma_1, \gamma_2, \ldots, \gamma_{2g}$. 
We can embed $\mathbb{Z} [H]$ into  
the fractional field $\mathcal{K}_H :=
\mathbb{Z} [H] (\mathbb{Z} [H] - \{0\})^{-1}$. 

Let $(M,i_+,i_-) \in \mathcal{C}_{g,1}$ be a homology cylinder. 
Since $H_1 (M) \cong H_1 (\Sigma_{g,1})$, 
the field $\mathcal{K}_{H_1 (M)}:=\mathbb{Z} [H_1 (M)] 
(\mathbb{Z} [H_1 (M)]- \{0\})^{-1}$ 
is defined. 
We regard $\mathcal{K}_H$ and $\mathcal{K}_{H_1 (M)}$ as 
local coefficient systems on $\Sigma_{g,1}$ and $M$ respectively. 
By an argument using covering spaces, we have the following. 
We refer to \cite[Proposition 2.10]{cot} and 
\cite[Proposition 2.1]{klw} for the proof. 
\begin{lemma}\label{zsakasa:lem1}
$i_{\pm}: 
H_\ast (\Sigma_{g,1},p ;i_{\pm}^\ast \mathcal{K}_{H_1 (M)}) \to 
H_\ast (M,p ;\mathcal{K}_{H_1 (M)})$ are isomorphisms of 
right $\mathcal{K}_{H_1 (M)}$-vector spaces. 
\end{lemma}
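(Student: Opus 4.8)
The plan is to reduce the statement to the defining property (iv) of a homology cylinder — that $i_\pm\colon H_*(X)\to H_*(M)$ are isomorphisms with $\Z$-coefficients — by an argument with covering spaces and a universal coefficient / change-of-rings comparison. First I would fix a homology cylinder $(M,i_+,i_-)$ and, since $H_1(M)\cong H_1(\Sigma_{g,1})=H$ is a finitely generated free abelian group, work with the covering $\widetilde M\to M$ corresponding to the kernel of $\pi_1(M)\to H_1(M)$; the maps $i_\pm$ then pull back to maps between the corresponding covers of $(\Sigma_{g,1},p)$ and $(M,p)$. The relevant chain complexes $C_*(\Sigma_{g,1},p;i_\pm^*\mathcal{K}_{H_1(M)})$ and $C_*(M,p;\mathcal{K}_{H_1(M)})$ are obtained from the $\Z[H_1(M)]$-chain complexes of these covers by tensoring (on the right) with the field $\mathcal{K}_{H_1(M)}$, which is flat over $\Z[H_1(M)]$ — this flatness is the point that makes passing between integral and localized coefficients harmless.

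The key steps, in order, would be: (1) observe that $\Z[H_1(M)]$ is a Noetherian integral domain (a Laurent polynomial ring over $\Z$) so its field of fractions $\mathcal{K}_{H_1(M)}$ is a flat — indeed faithfully flat after inverting — module, hence $H_*(-\,;\mathcal{K}_{H_1(M)})\cong H_*(-\,;\Z[H_1(M)])\otimes_{\Z[H_1(M)]}\mathcal{K}_{H_1(M)}$ by the universal coefficient spectral sequence, which degenerates; (2) identify the map induced by $i_\pm$ on $\mathcal{K}_{H_1(M)}$-homology with the map induced on $\Z[H_1(M)]$-homology, tensored with $\mathcal{K}_{H_1(M)}$; (3) reduce further, using that $H_1(M)\cong H$ via $i_\pm$ and hence that on $\Sigma_{g,1}$ the coefficient system $i_\pm^*\mathcal{K}_{H_1(M)}$ is just $\mathcal{K}_H$, to a statement about the map on $\Z[H]$-homology; and (4) deduce that map is an isomorphism from condition (iv). For step (4) the cleanest route is to note that $(M,i_\pm(\Sigma_{g,1}))$ is, by (iv) and a Mayer–Vietoris / excision computation on the double, a relative homology equivalence with $\Z$ coefficients — that is, $H_*(M,i_\pm(\Sigma_{g,1});\Z)=0$ — and that this acyclicity is inherited by the maximal free abelian cover because $H_1(M)$ acts on it; concretely, the mapping cone of $C_*(\widetilde{\Sigma_{g,1}})\to C_*(\widetilde M)$ is a bounded complex of finitely generated free $\Z[H]$-modules whose homology is a torsion $\Z[H]$-module (it vanishes after $\otimes_{\Z[H]}\Z$ and one applies a standard Wang-type or Milnor-type argument), hence it vanishes after $\otimes_{\Z[H]}\mathcal{K}_H$.

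The main obstacle I expect is step (4): establishing that the $\Z[H]$-homology of the pair (equivalently, the homology of the mapping cone over the free abelian cover) is $\mathcal{K}_H$-torsion, i.e. vanishes after localizing. Passing from "$H_*(M,\Sigma_{g,1};\Z)=0$" to "$H_*$ with $\Z[H]$-coefficients is torsion" is the nontrivial homological-algebra content — it is exactly the kind of statement proved via the long exact sequence of the cover together with Milnor's argument that $\Z[H]$-modules which are $\Z$-acyclic are annihilated by inverting the augmentation-ideal complement, or more directly via the references cited (\cite[Proposition 2.10]{cot}, \cite[Proposition 2.1]{klw}). Once that torsion vanishing is in hand, everything else is formal: flatness of $\mathcal{K}_H$ over $\Z[H]$ promotes the cone's homology to zero over $\mathcal{K}_H$, which is precisely the assertion that $i_\pm$ induce isomorphisms on $\mathcal{K}_{H_1(M)}$-homology relative to the basepoint $p$.
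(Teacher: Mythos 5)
Your proposal is correct and takes essentially the same approach as the paper, which gives no proof of its own but defers to Cochran--Orr--Teichner and Kirk--Livingston--Wang, describing the method only as ``an argument using covering spaces.'' Passing to the free abelian cover, exploiting flatness of $\mathcal{K}_{H_1(M)}$ over $\mathbb{Z}[H_1(M)]$, and upgrading the $\mathbb{Z}$-acyclicity of the pair $(M, i_\pm(\Sigma_{g,1}))$ coming from condition (iv) of the definition of a homology cylinder to $\mathcal{K}_{H_1(M)}$-acyclicity by the standard rank/Cayley--Hamilton argument on the mapping cone is precisely what those references do.
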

\noindent
This lemma plays an important role in defining our invariants below. 

\bigskip
\noindent
{\bf (I) Magnus representation} \ 

By using the spine $R_{2g}$ taken in the previous section, we 
identify $\pi_1 (\Sigma_{g,1})=\langle \gamma_1, \ldots , \gamma_{2g} \rangle$ 
with a free group $F_{2g}$ of rank $2g$. Since $R_{2g} \subset \Sigma_{g,1}$ 
is a deformation retract, we have
\begin{align*}
H_1 (\Sigma_{g,1},p\,;i_{\pm}^\ast \mathcal{K}_{H_1 (M)}) &\cong 
H_1 (R_{2g},p\,;i_{\pm}^\ast \mathcal{K}_{H_1 (M)}) \\
&= C_1 (\widetilde{R_{2g}}) \otimes_{F_{2g}} 
i_{\pm}^\ast \mathcal{K}_{H_1 (M)} 
\cong \mathcal{K}_{H_1 (M)}^{2g}
\end{align*}
\noindent
with a basis $\{ \widetilde{\gamma_1} \otimes 1, \ldots , 
\widetilde{\gamma_{2g}} \otimes 1\} 
\subset C_1 (\widetilde{R_{2g}}) \otimes_{F_{2g}} 
i_{\pm}^\ast \mathcal{K}_{H_1 (M)}$ 
as a right free $\mathcal{K}_{H_1 (M)}$-module, where 
$\widetilde{\gamma_i}$ is a lift 
of $\gamma_i$ on the universal covering $\widetilde{R_{2g}}$. 
We denote by $\mathcal{K}_{H_1 (M)}^{2g}$ the space of 
column vectors with $2g$ entries in $\mathcal{K}_{H_1 (M)}$. 

\begin{definition}\label{def:Magnus}
$(1)$ For $M=(M,i_+,i_-) \in \mathcal{C}_{g,1}$, we 
denote by $r'(M) \in \mathrm{GL}(2g,\mathcal{K}_{H_1 (M)})$ 
the representation matrix of 
the right $\mathcal{K}_{H_1 (M)}$-isomorphism
\[\mathcal{K}_{H_1 (M)}^{2g} \cong 
H_1 (\Sigma_{g,1},p\,;i_-^\ast \mathcal{K}_{H_1 (M)}) 
\xrightarrow[i_+^{-1} \circ i_-]{\cong} 
H_1 (\Sigma_{g,1},p\,;i_+^\ast \mathcal{K}_{H_1 (M)}) 
\cong \mathcal{K}_{H_1 (M)}^{2g}\]
$(2)$ The {\it Magnus representation\/} for $\mathcal{C}_{g,1}$ 
is the map $r :\mathcal{C}_{g,1} \to \mathrm{GL}(2g,\mathcal{K}_{H})$ 
which assigns to $M=(M,i_+,i_-) \in \mathcal{C}_{g,1}$ the 
matrix $r(M):={}^{i_+^{-1}} r'(M)$ obtained from $r'(M)$ by applying 
$i_+^{-1}$ to each entry.
\end{definition}
\noindent
We call $r(M)$ the {\it Magnus matrix} for $M$. 
The map $r$ has the following properties: 

\begin{theorem}[\cite{sakasai08, sa3}]\label{thm:magnus_properties} 
$(1)$ $($Crossed homomorphism\/$)$ \ For $M_1, M_2 \in \mathcal{C}_{g,1}$, 
we have 
\[r(M_1 \cdot M_2) = r(M_1) \cdot {}^{\sigma (M_1)} r(M_2).\]
In particular, the restriction of $r$ to $\mathcal{IC}_{g,1}$ 
is a homomorphism. 

\noindent
$(2)$ $($Symplecticity\/$)$ \ 
For any $M \in \mathcal{C}_{g,1}$, we have the equality 
\[\overline{r (M)^T} \ \widetilde{J} \ r (M) = 
{}^{\sigma (M)} \widetilde{J},\]
where $\overline{r (M)^T}$ is obtained from $r (M)$ by taking 
the transpose and applying the involution induced from the map 
$(H \ni x \mapsto x^{-1} \in H)$ to each entry, and 
$\widetilde{J} \in \mathrm{GL} (2g, \mathbb{Z} [H])$ 
is the matrix which appeared in Papakyriakopoulos' paper \cite{papa}. 
(The matrix $J$ is mapped to the usual symplectic matrix by applying 
the trivializer $\mathbb{Z}[H] \to \mathbb{Z}$ 
to each entry.)

\noindent
$(3)$ $($Homology cobordism invariance\/$)$ \ The map $r:\mathcal{C}_{g,1} \to 
\mathrm{GL}(2g,\mathcal{K}_{H})$ induces a crossed homomorphism 
$r:\mathcal{H}_{g,1} \to \mathrm{GL}(2g,\mathcal{K}_{H})$ and its restriction 
to $\mathcal{IH}_{g,1}$ is a homomorphism. 
\end{theorem}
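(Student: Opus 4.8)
All three assertions follow from the naturality of twisted homology together with Lemma~\ref{zsakasa:lem1}, and the plan is to read each of them off from the behaviour of the defining isomorphism $i_+^{-1}\circ i_-$ under, respectively, stacking of cylinders, twisted Poincar\'e--Lefschetz duality, and homology cobordism. For $(1)$ I would first unwind the product $M_1\cdot M_2=M_1\cup_{i_-^{1}\circ(i_+^{2})^{-1}}M_2$: by construction it is parametrized by $i_+^{1}$ on the positive side and $i_-^{2}$ on the negative side, and along the gluing surface the embeddings $i_-^{1}$ and $i_+^{2}$ are identified. A Mayer--Vietoris argument, using that the gluing surface maps by a $\mathcal{K}$-homology isomorphism into each of $M_1$ and $M_2$, identifies $H_1(M_1\cdot M_2;\mathcal{K})$ with both $H_1(M_1;\mathcal{K})$ and $H_1(M_2;\mathcal{K})$, the comparison of the latter two being $i_-^{1}\circ(i_+^{2})^{-1}$. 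Tracing the composite that computes $r'(M_1\cdot M_2)$ through this decomposition shows it equals $r'(M_1)\circ r'(M_2)$ as a map $H_1(\Sigma_{g,1},p;(i_-^{2})^*\mathcal{K})\to H_1(\Sigma_{g,1},p;(i_+^{1})^*\mathcal{K})$; the only subtlety is that when one transports everything to matrices over $\mathbb{Z}[H]$ by applying $(i_+^{1})^{-1}$, the factor coming from $M_2$ is seen through $i_+^{1}$ rather than through its own $i_+^{2}$, and the discrepancy $(i_+^{1})^{-1}\circ i_+^{2}=(i_+^{1})^{-1}\circ i_-^{1}=\sigma(M_1)$ is exactly the left twist in ${}^{\sigma(M_1)}r(M_2)$. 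Restricting to $\mathcal{IC}_{g,1}$, where $\sigma$ is trivial, turns this crossed-homomorphism identity into an honest homomorphism.

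For $(2)$ the input is the $\mathcal{K}$-valued intersection pairing on $H_1(\Sigma_{g,1},p;i_\pm^*\mathcal{K})$, which in the basis $\{\widetilde{\gamma_i}\otimes 1\}$ is represented by the image under $i_\pm$ of Papakyriakopoulos' matrix $\widetilde{J}$; that $\widetilde{J}$ represents the twisted intersection form and specializes to the standard symplectic form is the computation of \cite{papa}. Twisted Poincar\'e--Lefschetz duality for the cobordism $M$, together with condition (iii) of Definition~\ref{def:HCgeneral} (so that $i_+$ and $i_-$ agree on $\partial\Sigma_{g,1}$) and the fact from Lemma~\ref{zsakasa:lem1} that $i_\pm$ are $\mathcal{K}$-homology isomorphisms, shows that $i_+^{-1}\circ i_-$ carries the intersection pairing on the $i_-$-side to the one on the $i_+$-side. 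Writing this in matrix form and applying $i_+^{-1}$ to pass to $\mathbb{Z}[H]$ yields $\overline{r(M)^T}\,\widetilde{J}\,r(M)={}^{\sigma(M)}\widetilde{J}$, the left twist again reflecting that the two ends of $M$ meet the coefficient system through $i_+$ and $i_-$.

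For $(3)$, given a homology cobordism $W$ from $M$ to $N$, the composites $i_\pm^{W}\colon\Sigma_{g,1}\to M\hookrightarrow W$ and $j_\pm^{W}\colon\Sigma_{g,1}\to N\hookrightarrow W$ coincide under the boundary identification built into the cobordism, so once one knows that $M\hookrightarrow W\hookleftarrow N$ induce isomorphisms on $\mathcal{K}_{H_1(W)}$-homology one can compute the composite defining $r'$ through $H_1(W;\mathcal{K})$ from either end and conclude $r(M)=r(N)$; the crossed-homomorphism and homomorphism statements over $\mathcal{H}_{g,1}$ and $\mathcal{IH}_{g,1}$ then follow formally from $(1)$, since $\sigma$ also factors through $\mathcal{H}_{g,1}$. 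I expect the main obstacle to be precisely this last point, the upgrade of a $\mathbb{Z}$-homology cobordism to a $\mathcal{K}_H$-homology cobordism: $\mathbb{Z}$-acyclicity of the pairs $(W,M)$ and $(W,N)$ does not by itself force $H_*(W,M;\mathcal{K})=0$, and one has to feed in twisted Poincar\'e--Lefschetz duality on $W$ (which interchanges the two ends $M$ and $N$ of $\partial W$) together with universal coefficients over the field $\mathcal{K}_H$, along the lines of \cite{cfk}.
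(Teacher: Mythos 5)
The paper gives no proof here: Theorem~\ref{thm:magnus_properties} is stated as a citation to \cite{sakasai08} and \cite{sa3}, so there is no in-text argument to compare against. That said, your sketch is correct and reproduces the standard route those references take: for (1), the factoring of $i_+^{-1}\circ i_-$ through the gluing surface identified with $i_-^1 = i_+^2$, with the twist by $\sigma(M_1)$ arising exactly as you say from renormalizing $M_2$'s matrix via $(i_+^1)^{-1}$ instead of $(i_+^2)^{-1}$; for (2), twisted Poincar\'e--Lefschetz duality on $M$ identifying the twisted intersection pairings on the two ends, with $\widetilde{J}$ playing the role of the pairing in the $\{\gamma_i\}$-basis (this is precisely the content of \cite{sa3}); and for (3), reading off the defining composite through $H_1(W;\mathcal{K}_{H_1(W)})$ once one knows $M\hookrightarrow W\hookleftarrow N$ are $\mathcal{K}$-homology equivalences. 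Your flagged "obstacle" in (3) is real and well-identified: one needs the same localization principle that underlies Lemma~\ref{zsakasa:lem1} (a finite free $\mathbb{Z}[H_1(W)]$-complex that becomes acyclic after augmentation to $\mathbb{Z}$ remains acyclic over the quotient field $\mathcal{K}_{H_1(W)}$), applied now to the pairs $(W,M)$ and $(W,N)$; this is exactly the argument of \cite[Prop.~2.10]{cot} and \cite[Prop.~2.1]{klw}, supplemented by duality for degrees beyond $0,1$, as you indicate.
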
 

\bigskip
\noindent
{\bf (II) $H$-torsion} 

Since the relative complex 
$C_\ast (M,i_+(\Sigma_{g,1}); \mathcal{K}_{H_1 (M)})$ 
obtained from any smooth triangulation of $(M,i_+(\Sigma_{g,1}))$ 
is acyclic by Lemma \ref{zsakasa:lem1}, 
we can define its Reidemeister torsion 
\[\tau(C_\ast (M,i_+(\Sigma_{g,1});\mathcal{K}_{H_1 (M)})) \in 
\mathcal{K}_{H_1 (M)}^\times/(\pm H_1 (M)),\]
where 
$\mathcal{K}_{H_1 (M)}^\times := \mathcal{K}_{H_1 (M)}-\{0\}$ is 
the unit group of $\mathcal{K}_{H_1 (M)}$. 
We refer to Milnor \cite{mi} and Turaev \cite{tu2} 
for generalities of Reidemeister torsions. 

\begin{definition}
The $H$-{\it torsion\/} $\tau (H)$ of a homology cylinder 
$M=(M,i_+,i_-) \in \mathcal{C}_{g,1}$ 
is defined by 
\[\tau (M):={}^{i_+^{-1}}\tau(C_\ast (M,i_+(\Sigma_{g,1});
\mathcal{K}_{H_1 (M)}))
\in \mathcal{K}_{H}^\times/(\pm H),\]
where $\mathcal{K}_{H}^\times=\mathcal{K}_{H}-\{0\}$ is 
the unit group of $\mathcal{K}_{H}$. 
\end{definition} 
\noindent
The map $\tau: \mathcal{C}_{g,1} \to \mathcal{K}_{H}^\times/(\pm H)$ has 
the following properties: 

\begin{theorem}\label{thm:torsion_properties} 
$(1)$ $($Crossed homomorphism \cite{sakasai08}$)$ \ 
For $M_1, M_2 \in \mathcal{C}_{g,1}$, 
we have 
\[\tau (M_1 \cdot M_2) = \tau (M_1) \cdot {}^{\sigma (M_1)} \tau (M_2).\]
In particular, the restriction of $\tau$ to $\mathcal{IC}_{g,1}$ 
is a homomorphism. 

\noindent
$(2)$ $(${\rm Cha-Friedl-Kim} \cite[Theorem 3.10]{cfk}, 
{\rm Turaev} \cite[Theorem 1.11.2]{turaev_knot}$)$ \ 
If $M,N \in \mathcal{C}_{g,1}$ are 
homology cobordant, then there exists 
$q \in \mathcal{K}_H^\times$ such that 
\[\tau (M)= \tau (N) \cdot q \cdot \overline{q} 
\in \mathcal{K}_H^\times/(\pm H).\]
\end{theorem}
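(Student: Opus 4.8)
The plan is to obtain (1) from the multiplicativity of Reidemeister torsion under gluing, and (2) from the same multiplicativity combined with twisted Poincar\'e--Lefschetz duality on the cobordism; the heart of the matter, as in \cite{cfk}, is the duality step.

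For (1), write $M_1\cdot M_2=(M_1\cup_\phi M_2,i_+,j_-)$ with gluing map $\phi=i_-^{M_1}\circ(j_+^{M_2})^{-1}$. First I would choose a smooth triangulation of $M_1\cup_\phi M_2$ compatible with the splitting into $M_1$, $M_2$ and with the subsurface $i_+(\Sigma_{g,1})$, and use excision to produce a short exact sequence of based $\mathcal{K}_{H_1(M_1\cdot M_2)}$--chain complexes
\[
0\longrightarrow C_\ast(M_1,i_+(\Sigma_{g,1}))\longrightarrow C_\ast(M_1\cup_\phi M_2,i_+(\Sigma_{g,1}))\longrightarrow C_\ast(M_2,j_+(\Sigma_{g,1}))\longrightarrow 0,
\]
all three terms being acyclic by Lemma \ref{zsakasa:lem1} (note that $H_1(M_1)$, $H_1(M_2)$ and $H_1(M_1\cdot M_2)$ are all canonically identified with $H$). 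Milnor's multiplicativity theorem \cite{mi} then expresses the torsion of the middle complex as the product of those of the two outer ones, up to $\pm H_1(M_1\cdot M_2)$. The only genuine point is the bookkeeping of coefficient systems: the local system on the $M_2$--piece is pulled back along $j_+^{M_2}$ and then along $\phi$, so that, after applying the trivializer $(i_+^{M_1\cdot M_2})^{-1}$ entrywise, its restriction to the $M_2$--piece becomes $\sigma(M_1)\circ(j_+^{M_2})^{-1}$, because the gluing identifies $j_+^{M_2}$ with $i_-^{M_1}$ on homology and $(i_+^{M_1})^{-1}\circ i_-^{M_1}=\sigma(M_1)$. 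Hence the $M_1$--contribution is $\tau(M_1)$ and the $M_2$--contribution is ${}^{\sigma(M_1)}\tau(M_2)$, which is the stated formula; when $M_1\in\mathcal{IC}_{g,1}$ one has $\sigma(M_1)=\mathrm{id}$, so $\tau$ restricts to a homomorphism on $\mathcal{IC}_{g,1}$.

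For (2), let $W$ be a homology cobordism between $M=(M,i_+,i_-)$ and $N=(N,i_+,i_-)$. The boundary parametrizations glue the two copies of $i_+(\Sigma_{g,1})$ into a single subsurface $\Sigma_+\subset\partial W$, and $M$, $N$ are complementary codimension-zero submanifolds of $\partial W$ meeting along $\partial M$. I would identify $\mathcal{K}_{H_1(W)}$ with $\mathcal{K}_{H_1(M)}$, $\mathcal{K}_{H_1(N)}$ and $\mathcal{K}_H$ via the homology isomorphisms, and apply the multiplicativity argument of (1) to the triples $\Sigma_+\subset M\subset W$ and $\Sigma_+\subset N\subset W$, whose relative $\mathcal{K}_H$--complexes are acyclic. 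Applying the trivializer $(i_+^W)^{-1}$ entrywise, both $\tau(M)\cdot\tau(C_\ast(W,M))$ and $\tau(N)\cdot\tau(C_\ast(W,N))$ equal the torsion of $C_\ast(W,\Sigma_+)$, so
\[
\tau(M)\big/\tau(N)\;\doteq\;\tau\bigl(C_\ast(W,N)\bigr)\big/\tau\bigl(C_\ast(W,M)\bigr)\qquad\text{in }\mathcal{K}_H^\times/(\pm H).
\]
Then I would invoke Turaev's duality theorem \cite[Theorem 1.11.2]{turaev_knot} (see also \cite{tu2}): twisted Poincar\'e--Lefschetz duality for $W$ with the involuted coefficient field $\mathcal{K}_H$ relates the torsions of the complementary boundary pieces $M$ and $N$ by $\tau(C_\ast(W,M))\cdot\overline{\tau(C_\ast(W,N))}^{\pm1}\doteq 1$, the correct sign (dictated by the parity of $\dim W$ and the conjugation) being the one for which the above combines to give
\[
\tau(M)\big/\tau(N)\;\doteq\;q\cdot\overline q,\qquad q:=\tau\bigl(C_\ast(W,N)\bigr)\in\mathcal{K}_H^\times,
\]
which is the assertion.

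The hard part will be the acyclicity input used in (2): passing from $H_\ast(W,M;\mathbb{Z})=0$ (which holds because $M\hookrightarrow W$ induces a homology isomorphism) to $H_\ast(W,M;\mathcal{K}_H)=0$ -- and likewise for $N$ and for $\Sigma_+$ -- requires analysing the abelian cover of $W$, i.e. the analogue for the cobordism of the covering space argument behind Lemma \ref{zsakasa:lem1}. The other delicate point is the bookkeeping: one must keep track of the indeterminacy $\pm H$ throughout and, above all, pin down the exponent $\pm1$ in the duality relation, since it is precisely that sign which makes the obstruction a genuine norm $q\overline q$ rather than $q/\overline q$.
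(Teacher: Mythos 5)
The paper gives no proof of this theorem: part $(1)$ is attributed to \cite{sakasai08} and part $(2)$ to Cha--Friedl--Kim \cite{cfk} and Turaev \cite{turaev_knot}. Your sketch correctly reproduces the arguments of those sources -- Milnor multiplicativity along the splitting $M_1\cup_\phi M_2$ with $\sigma(M_1)$-twist bookkeeping for $(1)$, and multiplicativity along $\Sigma_+\subset M,N\subset W$ combined with Milnor/Turaev torsion duality for $(2)$. The two points you flag resolve as you expect: the $\mathcal{K}_H$-acyclicity of $(W,M)$, $(W,N)$, $(W,\Sigma_+)$ follows from the same covering-space/localization argument as Lemma \ref{zsakasa:lem1} applied to the $\mathbb{Z}$-homology isomorphisms $M,N,\Sigma_+\hookrightarrow W$, and the exponent in the duality relation $\tau(C_\ast(W,M))\doteq\overline{\tau(C_\ast(W,N))}^{(-1)^{\dim W-1}}$ is $-1$ because $\dim W=4$, which is exactly what turns the ratio $\tau(M)/\tau(N)$ into the norm $q\overline q$.
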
 
\noindent
Note that the restriction of $\tau$ to $\mathcal{M}_{g,1}$ is 
trivial since $\Sigma_{g,1} \times [0,1]$ is simple homotopy equivalent to 
$\Sigma_{g,1} \times \{1\}$.

Explicit formulas for  
$r(M)$ and $\tau (M)$ are given in \cite[Section 4]{gs08}, 
which are based on the formulas for the corresponding invariants of string links 
by Kirk-Livingston-Wang \cite{klw}. 
An {\it admissible presentation} of $\pi_1 (M)$ is defined to be 
a presentation of the form 
\[\langle i_- (\gamma_1),\ldots,i_- (\gamma_{2g}), 
z_1 ,\ldots, z_l, 
i_+ (\gamma_1),\ldots,i_+ (\gamma_{2g}) \mid 
r_1, \ldots, r_{2g+l}
\rangle\]
for some integer $l \ge 0$. 
That is, it is a finite presentation with deficiency $2g$ 
whose generating set 
includes $i_- (\gamma_1),\ldots,i_- (\gamma_{2g}), 
i_+ (\gamma_1),\ldots,i_+ (\gamma_{2g})$ and is ordered as above. 
Such a presentation always exists. For any admissible presentation, 
we define $2g \times (2g+l)$, $l \times (2g+l)$ and 
$2g \times (2g+l)$ matrices $A,B,C$ by 
\[A=\sideset{^{i_+^{-1}}\!}{_{\begin{subarray}{c}
{}1 \le i \le 2g\\
1 \le j \le 2g+l
\end{subarray}}}
{\Tmatrix{
\overline{\left(
\displaystyle\frac{\partial r_j}{\partial i_-(\gamma_i)}
\right)}}}, \ 
B=\sideset{^{i_+^{-1}}\!}{_{\begin{subarray}{c}
{}1 \le i \le l\\
1 \le j \le 2g+l
\end{subarray}}}
{\Tmatrix{
\overline{\left(
\displaystyle\frac{\partial r_j}{\partial z_i}
\right)}}}, \ 
C=\sideset{^{i_+^{-1}}\!}{_{\begin{subarray}{c}
{}1 \le i \le 2g\\
1 \le j \le 2g+l
\end{subarray}}}
{\Tmatrix{
\overline{\left(
\displaystyle\frac{\partial r_j}{\partial i_+(\gamma_i)}
\right)}}}\]
over $\Z [H] \subset \mathcal{K}_H$. 
\begin{proposition}[{\cite[Propositions 4.5, 4.6]{gs08}}]
\label{prop:MagnusFormula}
For any homology cylinder $M=(M,i_+, i_-) \in \Cg$, 
the square matrix $\begin{pmatrix} A \\ B \end{pmatrix}$ 
is invertible over $\mathcal{K}_H$ and we have 
\begin{align*}
r (M) &= 
-C \begin{pmatrix} A \\ B \end{pmatrix}^{-1} \!
\begin{pmatrix} I_{2g} \\ 0_{(l,2g)}\end{pmatrix} 
\in \mathrm{GL}(2g,\mathcal{K}_H),\\
\tau (M) &= \det \begin{pmatrix} A \\ B \end{pmatrix} \in 
\mathcal{K}_H^\times/(\pm H).
\end{align*}
\end{proposition}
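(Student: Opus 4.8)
The plan is to transport the computation from the manifold $M$ to the presentation $2$-complex of the given admissible presentation and to read off both invariants from the Fox Jacobian, following Kirk--Livingston--Wang's treatment of string links. Write the generators as $a_i:=i_-(\gamma_i)$, $z_1,\dots,z_l$, $c_i:=i_+(\gamma_i)$ (for $1\le i\le 2g$), let $Y$ be the CW complex with one vertex, one loop per generator and one $2$-cell per relator $r_j$, and let $Y_+\subset Y$ be the subcomplex consisting of the vertex and the $c$-loops. The first point to settle is that $(Y,Y_+)$ is a simple homotopy model for $(M,i_+(\Sigma_{g,1}))$. For this I would start from a handle decomposition of $M$ relative to $i_+(\Sigma_{g,1})$ in which, since $M$ and the two ends of the cobordism are connected, all handles have index $1$ or $2$; then $M$ deformation retracts rel $i_+(\Sigma_{g,1})$ onto a $2$-complex of the form $i_+(\Sigma_{g,1})\cup(\text{$1$-cells})\cup(\text{$2$-cells})$, and collapsing $i_+(\Sigma_{g,1})$ onto a spine, then adjoining the $a_i$ as extra generators together with defining relations expressing them in the other generators (an elementary expansion, hence a simple homotopy equivalence), yields an admissible presentation whose presentation pair is simple homotopy equivalent to $(M,i_+(\Sigma_{g,1}))$. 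Since any two admissible presentations of $\pi_1(M)$ are connected by Tietze transformations, which induce simple homotopy equivalences of presentation pairs, the pair coming from the original presentation is a simple homotopy model as well, so $C_*(Y,Y_+;\mathcal{K}_{H_1(M)})$ computes the relative homology and the Reidemeister torsion of $(M,i_+(\Sigma_{g,1}))$.

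Next I would make the chain complex explicit. Pulling the coefficient system $\mathcal{K}_{H_1(M)}$ back through $\pi_1(M)\to H_1(M)$, the relative cellular chain complex of $(Y,Y_+)$ is concentrated in degrees $1$ and $2$, with the relator cells as a degree-$2$ basis and the $a$- and $z$-loops as a degree-$1$ basis, so it has the form $0\to\mathcal{K}_{H_1(M)}^{2g+l}\xrightarrow{\partial}\mathcal{K}_{H_1(M)}^{2g+l}\to 0$. By the Fox-calculus description of the attaching maps, the matrix of $\partial$ has $(i,j)$-entry the image of $\partial r_j/\partial x_i$ with $x_i$ ranging over $\{a_i\}\cup\{z_i\}$; applying $i_+^{-1}$ to land in $\mathcal{K}_H$ and absorbing the transpose and the involution $x\mapsto x^{-1}$ built into the definitions of $A,B,C$, this matrix becomes $\begin{pmatrix}A\\B\end{pmatrix}$. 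By Lemma \ref{zsakasa:lem1} the complex $C_*(M,i_+(\Sigma_{g,1});\mathcal{K}_{H_1(M)})$ is acyclic, hence so is this complex, which forces $\begin{pmatrix}A\\B\end{pmatrix}\in\mathrm{GL}(2g+l,\mathcal{K}_H)$; and the Reidemeister torsion of a two-term acyclic complex is the class of the determinant of its boundary map, up to the standard $\pm 1$ exponent and $\pm H$ indeterminacy, which yields $\tau(M)=\det\begin{pmatrix}A\\B\end{pmatrix}\in\mathcal{K}_H^\times/(\pm H)$.

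For the Magnus matrix I would argue inside $H_1(M,p;\mathcal{K}_{H_1(M)})$, a $2g$-dimensional $\mathcal{K}_{H_1(M)}$-space by Lemma \ref{zsakasa:lem1}. Through $i_-$ and $i_+$ it is identified with $H_1(\Sigma_{g,1},p;i_-^*\mathcal{K}_{H_1(M)})$ and with $H_1(\Sigma_{g,1},p;i_+^*\mathcal{K}_{H_1(M)})$, under which the standard basis $\{\widetilde{\gamma_i}\otimes 1\}$ maps to the classes $\mathbf{a}=(a_i)$ of the $a$-loops and to the classes $\mathbf{c}=(c_i)$ of the $c$-loops respectively; by definition these two are related by $r'(M)$. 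On the other hand $H_1(M,p;\mathcal{K}_{H_1(M)})$ is the cokernel of the reduced boundary map $D_2$ of $Y$, which is the full Fox Jacobian, equal to $\begin{pmatrix}A\\B\\C\end{pmatrix}$ after the same identifications; thus the classes of the $4g+l$ $1$-cells satisfy the relations encoded by the columns of this matrix, namely $\begin{pmatrix}A\\B\end{pmatrix}^{T}\begin{pmatrix}\mathbf{a}\\\mathbf{z}\end{pmatrix}+C^{T}\mathbf{c}=0$. Inverting $\begin{pmatrix}A\\B\end{pmatrix}$ and extracting the $\mathbf{a}$-block gives $\mathbf{a}=-\Bigl(C\begin{pmatrix}A\\B\end{pmatrix}^{-1}\begin{pmatrix}I_{2g}\\0_{(l,2g)}\end{pmatrix}\Bigr)^{T}\mathbf{c}$, and comparing with the defining relation of $r'(M)$ and applying $i_+^{-1}$ (already built into $A,B,C$) gives $r(M)=-C\begin{pmatrix}A\\B\end{pmatrix}^{-1}\begin{pmatrix}I_{2g}\\0_{(l,2g)}\end{pmatrix}$.

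The step I expect to be the main obstacle is the reduction in the first paragraph: one must produce an admissible presentation whose presentation pair is genuinely simple homotopy equivalent to $(M,i_+(\Sigma_{g,1}))$, and then check that replacing it by an arbitrary admissible presentation changes nothing --- that is, that the connecting Tietze transformations can be chosen to respect the distinguished generators $i_\pm(\gamma_i)$, or equivalently that an elementary Tietze move on an admissible presentation multiplies $\det\begin{pmatrix}A\\B\end{pmatrix}$ only by an element of $\pm H$ and leaves the formula for $r(M)$ intact. Everything else --- identifying homology groups with cellular chain groups and keeping track of transposes, of the involution $x\mapsto x^{-1}$, and of the coefficient changes $i_\pm^{-1}$ --- is routine bookkeeping that I would fix once at the outset.
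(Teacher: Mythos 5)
Your overall strategy --- pass to the presentation $2$-complex of an admissible presentation, identify the relative chain complex with the Fox Jacobian, read off the torsion as the determinant of the $(A,B)$-block and the Magnus matrix from the cokernel presentation of $H_1(M,p)$ --- is exactly the route taken in the cited source \cite[Propositions 4.5, 4.6]{gs08}, which itself follows Kirk--Livingston--Wang \cite{klw}; the present paper does not re-prove the statement but refers the reader there. Your computation of the Magnus matrix is unproblematic: $r(M)$ is defined intrinsically via the twisted $H_1$, which is a homotopy invariant, so any admissible presentation complex computes the same cokernel and the formula is immediate once the bookkeeping of transposes and the involution is fixed. You have also correctly isolated the one nontrivial point, namely the torsion side.

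However, one assertion in your first paragraph is not correct as stated: it is not true that ``any two admissible presentations of $\pi_1(M)$ are connected by Tietze transformations, which induce simple homotopy equivalences of presentation pairs.'' A Tietze move of type T1 (adding a generator together with a relation expressing it) is an elementary expansion and does give a simple homotopy equivalence, but a move of type T2 (adjoining a consequence relation) attaches a $2$-cell along a nullhomotopic loop and changes the homotopy type of the $2$-complex; balanced sequences of T1/T2 need not give simple homotopy equivalences (this is precisely the difficulty behind the Andrews--Curtis problem). The way the gap is actually closed --- and the way you re-phrase it in your final paragraph, which is the correct fix --- is a direct combinatorial verification: one exhibits a handle-decomposition presentation whose pair is simple homotopy equivalent to $(M,i_+(\Sigma_{g,1}))$, computes the torsion there, and then checks by hand that each elementary move relating two admissible presentations changes $\begin{pmatrix}A\\B\end{pmatrix}$ by left/right multiplication by elementary matrices and by $\pm H$, so $\det\begin{pmatrix}A\\B\end{pmatrix}$ is unchanged in $\mathcal{K}_H^\times/(\pm H)$. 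With that substitution for the incorrect ``Tietze $\Rightarrow$ simple homotopy'' claim, your argument reproduces the cited proof.
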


\begin{example}[{\cite[Examples 4.4, 6.2]{sakasai08}}]\label{ex:string_comp}
Let $L$ be the string link of $2$ strings depicted in 
Figure \ref{fig:ex4_4}. 
We can construct a homology cylinder 
$(M_L,i_+,i_-) \in \mathcal{C}_{2,1}$ as 
mentioned in Example \ref{ex:stringlink}.

\begin{figure}[htbp]
\begin{center}
\includegraphics{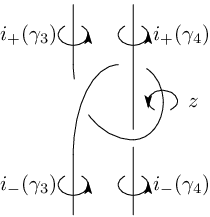}
\end{center}
\caption{The string link $L$}
\label{fig:ex4_4}
\end{figure}

An admissible presentation of $\pi_1 (M_L)$ is given by 
\[{\small \left\langle
\begin{array}{c|c}
\begin{array}{c}
i_-(\gamma_1),\ldots,i_-(\gamma_4)\\
z\\
i_+(\gamma_1),\ldots,i_+(\gamma_4)\\
\end{array} &
\begin{array}{l}
i_+(\gamma_1) i_-(\gamma_3)^{-1} i_+(\gamma_4) i_-(\gamma_1)^{-1},\\
{[i_+(\gamma_1),i_+(\gamma_3)]} i_+(\gamma_2) z i_-(\gamma_2)^{-1} 
{[i_-(\gamma_3),i_-(\gamma_1)]},\\
i_+(\gamma_4) i_-(\gamma_3) i_+(\gamma_4)^{-1} z^{-1},\\
i_-(\gamma_3) i_+(\gamma_3)^{-1} i_-(\gamma_3)^{-1} z, \ 
i_-(\gamma_4) z^{-1} i_+(\gamma_4)^{-1} z
\end{array}
\end{array}\right\rangle}.\] 
By using Proposition \ref{prop:MagnusFormula}, we have 
\begin{align*}
r(M_L) &= 
\left(\begin{array}{cccc}
1&0&0&0\\
&&&\\
0&1&0&0
\\
&&&\\
\frac{-\gamma_1^{-1}}{\gamma_3^{-1}+\gamma_4^{-1}-1}
&\frac{\gamma_2^{-1} \gamma_3^{-1} \gamma_4^{-1} 
-\gamma_4^{-1}+1}{\gamma_3^{-1}+\gamma_4^{-1}-1} &
\frac{\gamma_3^{-1}}{\gamma_3^{-1}+\gamma_4^{-1}-1}
&\frac{\gamma_4^{-1}(\gamma_4^{-1}-1)}{\gamma_3^{-1}+\gamma_4^{-1}-1}\\
&&&\\
\frac{\gamma_1^{-1}\gamma_3\gamma_4^{-1}}{\gamma_3^{-1}+\gamma_4^{-1}-1}
& \frac{(1-\gamma_3^{-1})(\gamma_2^{-1} \gamma_3^{-1} 
-\gamma_2^{-1}-1)}{\gamma_3^{-1}+\gamma_4^{-1}-1} 
&\frac{\gamma_3^{-1}-1}{\gamma_3^{-1}+\gamma_4^{-1}-1}
&\frac{-\gamma_3^{-1}\gamma_4^{-1}+\gamma_3^{-1}
+2\gamma_4^{-1}-1}{\gamma_3^{-1}+\gamma_4^{-1}-1}
\end{array}\right),\\
\tau (M_L) &= -1 + \gamma_3 -\gamma_3 \gamma_4^{-1}=
-\gamma_3(\gamma_3^{-1}+\gamma_4^{-1}-1).
\end{align*}
\noindent
Note that 
\[\det (r(M_L))= \gamma_3^{-1} \gamma_4^{-1} 
\frac{\gamma_3 + \gamma_4-1}{\gamma_3^{-1}+\gamma_4^{-1}-1}.\]
\end{example}

%----------Abelian quotients----------2011/8/31----

\section{Abelian quotients}\label{sec:abelian}
In this section, we discuss abelian quotients of 
$\mathcal{C}_{g,1}$ and $\mathcal{H}_{g,1}$ by  
comparing them to the corresponding result for $\mathcal{M}_{g,1}$. 
First, as commented in \cite{gs09}, 
we point out that $\mathcal{C}_{g,1}$ has the monoid $\theta_\mathbb{Z}^3$ 
of homology 3-spheres as a big abelian quotient. 
In fact, we have 
a {\it forgetful} homomorphism $F:\mathcal{C}_{g,1} 
\to \theta_\mathbb{Z}^3$ defined by 
$F(M,i_+,i_-) = S^3 \sharp X_1 \sharp X_2 \sharp 
\cdots \sharp X_n$ for the prime decomposition 
$M=M_0 \sharp X_1 \sharp X_2 \sharp 
\cdots \sharp X_n$ of $M$ where $M_0$ is the unique factor having 
non-empty boundary and $X_i \in \theta_\mathbb{Z}^3$ $(1 \le i \le n)$. 
The map $F$ owes its well-definedness to 
the uniqueness of the prime decomposition of 3-manifolds and 
it is a monoid epimorphism. 

The underlying 3-manifolds of homology cylinders obtained 
from $\mathcal{M}_{g,1}$ are all 
$\Sigma_{g,1} \times [0,1]$ and, in particular, irreducible. 
Therefore it seems more reasonable to compare $\mathcal{M}_{g,1}$ with 
the submonoid $\mathcal{C}_{g,1}^\mathrm{irr}$ of 
$\mathcal{C}_{g,1}$ consisting of all $(M,i_+,i_-)$ with $M$ irreducible. 

In contrast with the fact that $\mathcal{M}_{g,1}$ is a perfect group for 
$g \ge 3$ (see Harer \cite{harer}), 
many infinitely generated abelian quotients for 
monoids and homology cobordism groups of irreducible 
homology cylinders have been found until now. 
For example, we have the following results:

\begin{itemize}
\item \ In \cite[Corollary 6.16]{sakasai08}, 
we showed that the submonoids $\mathcal{C}_{g,1}^\mathrm{irr} \cap 
\mathcal{IC}_{g,1}$ 
and $\mathrm{Ker}\, (\mathcal{C}_{g,1}^\mathrm{irr} \to \mathcal{H}_{g,1})$ 
have abelian quotients isomorphic to $(\mathbb{Z}_{\ge 0})^\infty$. 
The proof uses the $H$-torsion $\tau$ and 
its non-commutative generalization. 

\item \ Morita \cite[Corollary 5.2]{morita_GD} used what is called 
the trace maps 
to show that the group $\mathcal{IH}_{g,1}$, which coincides with 
the quotient of $\mathcal{C}_{g,1}^\mathrm{irr} \cap 
\mathcal{IC}_{g,1}$ by homology cobordisms, has an abelian quotient 
isomorphic to $\mathbb{Z}^\infty$. 

\item \ In a joint work with Goda in \cite[Theorem 2.6]{gs09}, we showed that 
$\mathcal{C}_{g,1}^\mathrm{irr}$ has an abelian quotient 
isomorphic to $(\mathbb{Z}_{\ge 0})^\infty$ by using 
sutured Floer homology (a variant of Heegaard Floer homology). However, 
the projection map to 
this abelian quotient does not factor through $\mathcal{H}_{g,1}$. 
\end{itemize}

By taking into account the similarity between 
the two groups $\mathcal{M}_{g,1}$ and $\mathcal{H}_{g,1}$, 
it had been conjectured that 
$\mathcal{H}_{g,1}$ was perfect. 
However, Cha-Friedl-Kim \cite{cfk} found 
a method for extracting homology cobordism invariants 
of homology cylinders from 
the $H$-torsion $\tau : \mathcal{C}_{g,1} \to 
\mathcal{K}_H^\times/(\pm H)$, which is a crossed homomorphism, 
as follows. 

First they consider the subgroup $A \subset \mathcal{K}_H^\times$ 
defined by
\[A:=\{f^{-1} \cdot \varphi (f) \mid f \in \mathcal{K}_H^\times, \ 
\varphi \in \mathrm{Sp} (2g,\mathbb{Z})\},\]
by which we can obtain a {\it homomorphism} 
\[\tau : 
\mathcal{C}_{g,1} \longrightarrow \mathcal{K}_H^\times/(\pm H \cdot A).\]
Note that $f=\overline{f}$ holds in 
$\mathcal{K}_H^\times/(\pm H \cdot A)$ since $-I_{2g} \in 
\mathrm{Sp}(2g,\mathbb{Z})$. 
Second, they use the equality mentioned in 
Theorem \ref{thm:torsion_properties} (2). Namely, if we put  
\[N:=\{f \cdot \overline{f} \mid f \in \mathcal{K}_H^\times \},\]
then we obtain a homomorphism 
\[\widetilde{\tau}: 
\mathcal{H}_{g,1} \longrightarrow 
\mathcal{K}_H^\times/(\pm H \cdot A \cdot N).\]
Note that 
$f^2=f \overline{f}=1$ holds for any $f \in \mathcal{K}_H^\times/(\pm H \cdot A \cdot N)$. 

The structure 
of $\mathcal{K}_H^\times/(\pm H \cdot A \cdot N)$ is given as follows. 
Recall that $\mathcal{K}_H = \mathbb{Z} [H](\mathbb{Z} [H] -\{0\})^{-1}$. 
The ring $\mathbb{Z} [H]$ is a Laurent polynomial ring of $2g$ variables and 
it is a unique factorization domain. 
Thus every Laurent polynomial $f$ is factorized 
into irreducible polynomials 
uniquely up to multiplication by a unit in $\mathbb{Z} [H]$. 
In particular, for every irreducible polynomial $\lambda \in \mathbb{Z} [H]$, 
we can count the exponent of $\lambda$ in the factorization of $f$. 
This counting naturally extends to that for elements in 
$\mathcal{K}_H^\times$ by using negative numbers for denominators. 
Under the identification by 
$\pm H \cdot A \cdot N$, an element in 
$\mathcal{K}_H^\times/(\pm H \cdot A \cdot N)$ is determined by 
the exponents of all $\mathrm{Sp}(2g,\mathbb{Z})$-orbits 
of irreducible polynomials 
(up to multiplication by a unit in $\mathbb{Z} [H]$) modulo $2$. 
Note that the action of $\mathrm{Sp}(2g,\mathbb{Z})$ keeps the irreducibility of 
a polynomial in $\mathbb{Z} [H]$ and 
the number of its monomials unchanged. 
Therefore $\mathcal{K}_H^\times/(\pm H \cdot A \cdot N)$ is 
isomorphic to $(\mathbb{Z}/2\mathbb{Z})^\infty$. 
Finally by using infinitely many 
$(\mathbb{Z}/2\mathbb{Z})$-torsion elements of the knot concordance group, 
they show the following:
\begin{theorem}[Cha-Friedl-Kim \cite{cfk}]\label{thm:cfk}
The image of the homomorphism 
\[\widetilde{\tau} : 
\mathcal{H}_{g,1} \longrightarrow 
\mathcal{K}_H^\times/(\pm H \cdot A \cdot N)\]
is isomorphic to $(\mathbb{Z}/2\mathbb{Z})^\infty$ and it splits. 
\end{theorem}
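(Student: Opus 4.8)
The plan is to use the fact, established just above, that the target $\mathcal{K}_H^\times/(\pm H\cdot A\cdot N)$ is a vector space over $\mathbb{F}_2=\mathbb{Z}/2\mathbb{Z}$ with a natural basis indexed by the $\mathrm{Sp}(2g,\mathbb{Z})$-orbits of irreducible Laurent polynomials in $\mathbb{Z}[H]$ (up to units), the coordinate of an element being the corresponding exponent modulo $2$. Two things then remain: that $\Im\widetilde\tau$ is infinite-dimensional over $\mathbb{F}_2$ — whence, as a subspace of $(\mathbb{Z}/2\mathbb{Z})^\infty$, it is isomorphic to $(\mathbb{Z}/2\mathbb{Z})^\infty$ — and that the epimorphism of $\Hg$ onto $\Im\widetilde\tau$ admits a section. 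I would handle both at once by exhibiting a single infinite family of homology cylinders arising from knots of order dividing $2$ in the knot concordance group.

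First I would attach, to a knot $K\subset S^3$, a homology cylinder $M_K\in\Cg$ by applying the construction of Example~\ref{ex:stringlink} to the one-strand string link obtained by tying $K$ into the strand around $\gamma_{g+1}$; equivalently, $M_K$ is the infection of the trivial cylinder along a loop representing the primitive class $\gamma_{g+1}\in H$ by $K$. Two standard facts will be used: stacking corresponds to connected sum, so $M_K\cdot M_{K'}\cong M_{K\#K'}$ — in particular the $M_K$ commute up to isomorphism and $M_K^{\,2}\cong M_{K\#K}$ — and if $K$ is slice then $M_K$ is homology cobordant to the trivial cylinder, a homology cobordism being furnished by the exterior of a slice disk in $B^4$. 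Hence $[M_K]\in\Hg$ depends only on the concordance class of $K$, and $2[K]=0$ implies $[M_K]^2=e$. Next, by the explicit formula for the $H$-torsion in \cite[Section~4]{gs08} (cf.\ the computation in Example~\ref{ex:string_comp}), one has $\tau(M_K)\doteq\Delta_K(\gamma_{g+1})$ in $\mathcal{K}_H^\times/(\pm H)$, where $\Delta_K$ is the Alexander polynomial of $K$, read in $\mathbb{Z}[H]$ via $t^{\pm1}\mapsto\gamma_{g+1}^{\pm1}$; consequently $\widetilde\tau(M_K)$ records, for each $\mathrm{Sp}(2g,\mathbb{Z})$-orbit of irreducible polynomials, the parity of the multiplicity with which a representative divides $\Delta_K$.

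The heart of the argument is to choose an infinite family $K_1,K_2,\dots$ of knots, each of order dividing $2$ in the concordance group, whose Alexander polynomials $\Delta_{K_1},\Delta_{K_2},\dots$ are pairwise coprime in $\mathbb{Z}[t^{\pm1}]$ and each of which has an irreducible factor of odd multiplicity. (Concretely, one can draw such a family from the knots used by Cha--Friedl--Kim, for instance from negative amphichiral knots — automatically of order $\le2$ — realizing enough irreducible symmetric Alexander polynomials, the simplest being $\Delta_{4_1}(t)\doteq t^2-3t+1$.) Coprimality makes the sets of orbits of irreducible factors of the various $\Delta_{K_j}$ pairwise disjoint, so the vectors $v_j:=\widetilde\tau(M_{K_j})$ are nonzero and supported on disjoint coordinate sets, hence linearly independent over $\mathbb{F}_2$. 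Then the subgroup of $\Hg$ generated by the $[M_{K_j}]$ consists of pairwise commuting elements of order dividing $2$ and is carried by $\widetilde\tau$ injectively onto $\mathrm{span}_{\mathbb{F}_2}\{v_j\}$; this subgroup is therefore isomorphic to $(\mathbb{Z}/2\mathbb{Z})^\infty$, its image is an infinite-dimensional subspace of $\Im\widetilde\tau$, so $\Im\widetilde\tau\cong(\mathbb{Z}/2\mathbb{Z})^\infty$, and $v_j\mapsto[M_{K_j}]$ extends $\mathbb{F}_2$-linearly to the desired splitting section.

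The main obstacle is precisely that last choice: one needs infinitely many knots that are simultaneously of order $\le2$ in concordance and whose Alexander polynomials contribute new $\mathrm{Sp}(2g,\mathbb{Z})$-orbits of irreducible factors with odd multiplicity. Pinning down the concordance order restricts one to a limited stock of knots (amphichiral knots, or knots with slice double), and the Alexander-polynomial requirement has to be met inside that stock; one must also verify that genuinely coprime one-variable Alexander polynomials cannot yield coincident orbits of irreducible factors after substitution of primitive elements of $H$ and multiplication by units. Everything else — the construction of $M_K$, the torsion computation, and the elementary linear algebra over $\mathbb{F}_2$ — is routine or already available.
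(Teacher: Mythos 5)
The paper does not prove this theorem: it imports it verbatim from \cite{cfk}, accompanied only by the one-line remark that the proof goes ``by using infinitely many $(\mathbb{Z}/2\mathbb{Z})$-torsion elements of the knot concordance group.'' Your outline is entirely consistent with that remark and reproduces the correct architecture of the Cha--Friedl--Kim argument: build homology cylinders $M_K$ from knots $K$ by a local modification of the trivial cylinder, observe $M_K\cdot M_{K'}\cong M_{K\sharp K'}$ and that slice knots yield trivial homology-cobordism classes (slice-disk exterior), identify $\tau(M_K)$ up to units with the Alexander polynomial $\Delta_K(\gamma_{g+1})$, and feed in order-$\le 2$ concordance classes with ``independent'' torsion contributions. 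So the approach matches.

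There are, however, two places where your sketch falls short of a proof. The first you flag yourself and is the real content: one must actually exhibit infinitely many concordance classes of order dividing $2$ whose Alexander polynomials have irreducible factors of odd multiplicity lying in pairwise distinct $\mathrm{Sp}(2g,\mathbb{Z})$-orbits; this is precisely where Cha--Friedl--Kim do the work, and quoting $4_1$ plus ``negative amphichiral knots realizing enough polynomials'' is not a construction. Relatedly, one must verify that coprime one-variable polynomials $p,q$ cannot have $p(\gamma_{g+1})$ and $q(\gamma_{g+1})$ identified by $\pm H\cdot A\cdot N$: an element of $\mathrm{Sp}(2g,\mathbb{Z})$ can send $\gamma_{g+1}$ to an arbitrary primitive class of $H$, not merely permute the chosen basis, so this needs a genuine invariant (degree, number of monomials, the remark in Section \ref{sec:abelian} about ratios of monomials) rather than a mere appeal to coprimality. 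The second point concerns the word ``splits.'' Your argument produces a subgroup $G_0=\langle [M_{K_j}]\rangle\cong(\mathbb{Z}/2\mathbb{Z})^\infty$ of $\Hg$ on which $\widetilde\tau$ is injective, and hence a split surjection of $\Hg$ onto $(\mathbb{Z}/2\mathbb{Z})^\infty$ by composing $\widetilde\tau$ with a projection onto $\mathrm{span}_{\mathbb{F}_2}\{\widetilde\tau[M_{K_j}]\}$. It does not by itself show that $1\to\Ker\widetilde\tau\to\Hg\to\Im\widetilde\tau\to 1$ splits, unless the classes $\widetilde\tau[M_{K_j}]$ generate all of $\Im\widetilde\tau$ or one has control over commuting order-$\le 2$ preimages for a full basis; depending on how literally one reads the theorem, this is either a small gap or a matter of phrasing, but it is worth noting.
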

\begin{remark}
In \cite{cfk}, Cha-Friedl-Kim further applied the above method to 
homology cylinders over a compact oriented surface $\Sigma_{g,n}$ 
of genus $g$ with $n \ge 2$ boundaries and showed that the abelianization of 
the homology cobordism group $\mathcal{H}(\Sigma_{g,n})$ 
has infinite rank.  
\end{remark}

Now we try to investigate abelian quotients of $\mathcal{H}_{g,1}$ by 
using the Magnus representation $r$. 
It looks easier to extract information of $\mathcal{H}_{g,1}$ 
from the representation $r$ together with Cha-Friedl-Kim's idea, 
since $r$ itself is an 
homology cobordism invariant as mentioned in 
Theorem \ref{thm:magnus_properties} (3). 
Consider two maps
\begin{align*}
\widehat{r} &: \mathcal{H}_{g,1} 
\stackrel{r}{\longrightarrow} \mathrm{GL}(2g,\mathcal{K}_H)
\xrightarrow{\det} \mathcal{K}_H^\times 
\longrightarrow \mathcal{K}_H^\times/(\pm H),\\
\widetilde{r} &: \mathcal{H}_{g,1} 
\stackrel{\widehat{r}}{\longrightarrow} 
\mathcal{K}_H^\times/(\pm H)
\longrightarrow \mathcal{K}_H^\times/(\pm H \cdot A).
\end{align*}
\noindent
While $\widehat{r}$ is a crossed homomorphism, 
its restriction to $\mathcal{IH}_{g,1}$ and 
$\widetilde{r}$ are homomorphisms. 
Note that both $\mathcal{K}_H^\times/(\pm H)$ and 
$\mathcal{K}_H^\times/(\pm H \cdot A)$ are isomorphic to 
$\mathbb{Z}^\infty$. 

\begin{theorem}
$(1)$ \  For $(M,i_+,i_-) \in \mathcal{C}_{g,1}$, 
the equality
\[\widehat{r}(M) = 
\overline{\tau (M)} \cdot 
(\tau (M))^{-1} \ \ 
\in \mathcal{K}_H^\times/(\pm H)\]
holds. 

\noindent
$(2)$ \  For $g \ge 1$, the homomorphism 
$\widetilde{r}: \mathcal{H}_{g,1} \to 
\mathcal{K}_H^\times/(\pm H \cdot A)$ is trivial. 

\noindent
$(3)$ \ For $g \ge 2$, the image of 
the homomorphism $\widehat{r}|_{ \mathcal{IH}_{g,1}}: \mathcal{IH}_{g,1} \to 
\mathcal{K}_H^\times/(\pm H)$ is isomorphic to $\mathbb{Z}^\infty$. 
\end{theorem}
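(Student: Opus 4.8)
The plan is as follows. For (1), I would work directly from the explicit formula of Proposition \ref{prop:MagnusFormula}. Setting $P=\begin{pmatrix}A\\B\end{pmatrix}$, which is invertible over $\mathcal{K}_H$ with $\det P=\tau(M)$, and using that $B$ consists of the bottom $l$ rows of $P$ (so $BP^{-1}=(\,0\mid I_l\,)$) together with $CP^{-1}\begin{pmatrix}I_{2g}\\0\end{pmatrix}=-r(M)$ from the formula for $r(M)$, one sees that $\begin{pmatrix}C\\B\end{pmatrix}P^{-1}$ is block upper triangular with diagonal blocks $-r(M)$ and $I_l$; comparing determinants gives
\[\det r(M)\;=\;\det\begin{pmatrix}C\\B\end{pmatrix}\cdot\tau(M)^{-1}.\]
Next I would identify the numerator: $\det\begin{pmatrix}C\\B\end{pmatrix}$ is, up to $\pm H$, the image under $i_+^{-1}$ of $\tau\bigl(C_\ast(M,i_-(\Sigma_{g,1});\mathcal{K}_{H_1(M)})\bigr)$ --- this is the computation underlying Proposition \ref{prop:MagnusFormula} with the roles of $i_+$ and $i_-$ interchanged (reordering the generators of the admissible presentation changes the torsion only by a sign). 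Then Milnor's duality theorem for Reidemeister torsion (\cite{mi}; in the same spirit as Theorem \ref{thm:torsion_properties}(2)), applied to the $3$-manifold $M$ with $\partial M=i_+(\Sigma_{g,1})\cup i_-(\Sigma_{g,1})$, identifies $\tau(C_\ast(M,i_-(\Sigma_{g,1})))$ with the conjugate of $\tau(C_\ast(M,i_+(\Sigma_{g,1})))$ up to $\pm H_1(M)$; since $i_+^{-1}$ commutes with the involution, $\det\begin{pmatrix}C\\B\end{pmatrix}=\overline{\tau(M)}$ in $\mathcal{K}_H^\times/(\pm H)$, which is (1). (The sign of the exponent in the duality can be cross-checked against the symplecticity of Theorem \ref{thm:magnus_properties}(2), which forces $\overline{\widehat r(M)}\,\widehat r(M)=1$ because $\det\widetilde J$ is a unit of $\mathbb{Z}[H]$, hence lies in $\pm H$.) Part (2) is then immediate: in $\mathcal{K}_H^\times/(\pm H\cdot A)$ one has $\overline f\equiv f$ for every $f$, since $-I_{2g}\in\mathrm{Sp}(2g,\mathbb{Z})$ acts on $H$ by inversion and hence $f^{-1}\overline f\in A$ (as noted before Theorem \ref{thm:cfk}); therefore $\widehat r(M)=\overline{\tau(M)}\,\tau(M)^{-1}$ maps to $1$ modulo $\pm H\cdot A$, i.e.\ $\widetilde r=0$.

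For (3), note first that $\widehat r|_{\mathcal{IH}_{g,1}}$ is a homomorphism into $\mathcal{K}_H^\times/(\pm H)\cong\mathbb{Z}^\infty$ (Theorem \ref{thm:magnus_properties}(3)), so its image is a countable free abelian group and it suffices to show that this image has infinite rank. Identifying $\mathcal{K}_H^\times/(\pm H)$ with the free abelian group on the irreducible Laurent polynomials in $\mathbb{Z}[H]$ modulo units, the involution $f\mapsto\overline f$ permutes the basis, fixing the self-conjugate irreducibles and swapping the rest in pairs $\{\lambda,\overline\lambda\}$; by (1) the image of $\widehat r$ lies in $\mathrm{Im}(\overline{(\cdot)}-\mathrm{id})=\bigoplus_{\{\lambda,\overline\lambda\}}\mathbb{Z}\cdot([\overline\lambda]-[\lambda])$, a direct sum over the infinitely many non-self-conjugate pairs. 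So the task reduces to producing, for each $n$, a homology cylinder $M_n\in\mathcal{IC}_{g,1}$ (it descends to $\mathcal{IH}_{g,1}$ because $r$ is a homology cobordism invariant) whose $H$-torsion has a non-self-conjugate irreducible factor $\lambda_n$, with the pairs $\{\lambda_n,\overline{\lambda_n}\}$ pairwise distinct and with the classes $\widehat r(M_n)=\overline{\tau(M_n)}\,\tau(M_n)^{-1}$ arranged to be $\mathbb{Z}$-independent (a triangularity in $n$ suffices). I would get these from the string-link construction of Example \ref{ex:stringlink}, which requires $g\ge 2$ so that at least two strands fit in $\Sigma_{g,1}$: Example \ref{ex:string_comp} already provides a $2$-string link whose $H$-torsion equals, up to a unit of $\mathbb{Z}[H]$, the polynomial $\gamma_{g+1}\gamma_{g+2}-\gamma_{g+1}-\gamma_{g+2}$, which is irreducible and not associate to its conjugate, so $\widehat r\ne 1$ there; the idea is to run the same computation on an explicit infinite family $L_n$ (for instance obtained by iterating a basic clasp or by $(1,n)$-cabling a fixed pattern) and to check via Proposition \ref{prop:MagnusFormula} that a new non-self-conjugate irreducible factor appears each time.

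Parts (1) and (2) are essentially formal once the block-matrix identity and the duality identification of $\det\begin{pmatrix}C\\B\end{pmatrix}$ are in place, so I expect the main obstacle to be the realization step in (3): constructing an explicit infinite family of string links whose $H$-torsions can be evaluated in closed form, and then verifying --- a combinatorial question about Newton polygons and factorization in the Laurent ring $\mathbb{Z}[H]$ --- that the resulting conjugate pairs of irreducible factors are pairwise non-associate, so that the corresponding classes in $\mathbb{Z}^\infty$ are linearly independent. It is worth stressing that the knots tied in a single strand used by Cha--Friedl--Kim do not suffice here, since those string links have symmetric $H$-torsion and therefore give $\widehat r=1$; an essentially asymmetric family is needed.
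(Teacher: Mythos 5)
Your arguments for parts (1) and (2) are essentially the same as the paper's: the block-matrix manipulation you describe (showing that $\begin{pmatrix}C\\B\end{pmatrix}P^{-1}$ is block upper triangular with blocks $-r(M)$ and $I_l$) is a repackaging of the paper's row-operation computation, leading to $\det r(M)\cdot\tau(M)=\det\begin{pmatrix}C\\B\end{pmatrix}$ up to $\pm H$; identifying $\det\begin{pmatrix}C\\B\end{pmatrix}$ with ${}^{i_+^{-1}}\tau\bigl(C_\ast(M,i_-(\Sigma_{g,1}))\bigr)$ and invoking Milnor duality is exactly what the paper does; and (2) follows verbatim once $f\equiv\overline f$ modulo $A$ is noted. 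Your cross-check via symplecticity is a reasonable sanity check but is not needed.

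For part (3), however, you diverge from the paper and your proposal is incomplete. You plan to manufacture infinitely many distinct string links $L_n$ (by iterating a clasp or cabling), compute each $H$-torsion via Proposition~\ref{prop:MagnusFormula}, and verify that a fresh non-self-conjugate irreducible factor appears each time with the needed independence. You acknowledge you do not carry out this realization step, and it is genuinely the hard part of your plan: it requires an explicit closed-form torsion for each $L_n$ and a factorization argument in $\mathbb{Z}[H]$. The paper sidesteps all of this with a single computation plus a cheap trick: it uses just the one cylinder $M_L$ of Example~\ref{ex:string_comp} (corrected into $\mathcal{IC}_{2,1}$ by some $g_1\in\mathcal{M}_{2,1}$, which costs nothing since $\widehat r$ vanishes on $\mathcal{M}_{g,1}$), and then exploits the crossed-homomorphism identity to produce an infinite family: for a mapping class $f$ whose symplectic action sends $\gamma_3\mapsto\gamma_2+\gamma_3$, one has
\[
\widehat r\,(f^m\cdot M_L\cdot g_m)={}^{\sigma(f^m)}\widehat r(M_L)=\frac{\gamma_2^m\gamma_3+\gamma_4-1}{\gamma_2^{-m}\gamma_3^{-1}+\gamma_4^{-1}-1}\in\mathcal{K}_H^\times/(\pm H),
\]
and the numerators, being linear in $\gamma_3$ with monomial leading coefficient, are visibly irreducible and pairwise non-associate (and non-associate to their conjugates), giving independent classes. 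Your structural observation that the image of $\widehat r$ lies in the anti-invariant part $\mathrm{Im}(\overline{(\cdot)}-\mathrm{id})$ of $\mathcal{K}_H^\times/(\pm H)$ is correct and clarifying, as is your remark that the Cha--Friedl--Kim knot-in-one-strand examples have symmetric torsion and hence die under $\widehat r$; but neither replaces the missing realization step. The idea you are missing is to let a single example vary under the $\mathrm{Sp}(2g,\mathbb{Z})$-action coming from $\mathcal{M}_{g,1}$ rather than varying the underlying string link.
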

\begin{proof}
$(1)$ Let 
\[\langle i_- (\gamma_1),\ldots,i_- (\gamma_{2g}), 
z_1 ,\ldots, z_l, 
i_+ (\gamma_1),\ldots,i_+ (\gamma_{2g}) \mid 
r_1, \ldots, r_{2g+l}
\rangle\]
be an admissible presentation of $\pi_1 (M)$. We calculate 
the matrices $A$, $B$, $C$ as in the previous section. 
By the formula in Proposition \ref{prop:MagnusFormula}, we have an 
equality 
\[\begin{pmatrix}
r(M) & Z \end{pmatrix} = -C \begin{pmatrix} A \\ B \end{pmatrix}^{-1}\]
for some $2g \times l$ matrix $Z$. It follows that 
$r(M) A = -ZB -C$. By taking the determinant of
\[\begin{pmatrix}
r(M) & 0_{(2g,l)} \\ 0_{(l,2g)} & I_l
\end{pmatrix}
\begin{pmatrix} A \\ B \end{pmatrix}=
\begin{pmatrix} r(M)A \\ B \end{pmatrix}=
\begin{pmatrix} -ZB-C \\ B \end{pmatrix},
\]
we have the equality
\[\det (r(M)) \det \begin{pmatrix} A \\ B \end{pmatrix}=
\det \begin{pmatrix} -ZB-C \\ B \end{pmatrix} = 
\det \begin{pmatrix} -C \\ B \end{pmatrix}=
\det \begin{pmatrix} C \\ B \end{pmatrix}.\]
Again by Proposition \ref{prop:MagnusFormula}, 
we have $\det \begin{pmatrix} A \\ B \end{pmatrix}=\tau(M)$ and 
it is easy to see that 
\[\det \begin{pmatrix} C \\ B \end{pmatrix}={}^{\sigma(M)} \tau(M^{-1}),\]
where $M^{-1}=(-M,i_-,i_+) \in \mathcal{C}_{g,1}$. 
Recall that $\tau(M)$ is the pullback of the torsion of the complex 
$C_\ast (M,i_+(\Sigma_{g,1});\mathcal{K}_{H_1 (M)})$ by $i_+$, and 
$\tau (M^{-1})$ is that of 
$C_\ast (M,i_-(\Sigma_{g,1});\mathcal{K}_{H_1 (M)})$ by $i_-$. 
These complexes are related by the Poincar\'e duality. 
By the duality of Reidemeister torsions 
(see Milnor \cite{mi} or Turaev\cite{tu2}), we have 
\begin{align*}
{}^{i_-} \tau(M^{-1})&=
\tau (C_\ast (M,i_-(\Sigma_{g,1});\mathcal{K}_{H_1 (M)})) \\
&= \overline{\tau(C_\ast (M,i_+(\Sigma_{g,1});\mathcal{K}_{H_1 (M)}))}
=\overline{{}^{i_+} \tau(M)} \in \mathcal{K}_{H_1 (M)}/(\pm H_1 (M)).
\end{align*}
\noindent
Therefore we have ${}^{\sigma(M)} \tau(M^{-1})=\overline{\tau(M)}$. 
Our claim follows from this. 

$(2)$  As mentioned above, the action of $\mathrm{Sp}(2g,\mathbb{Z})$ implies that 
$f=\overline{f}$ for any $f \in \mathcal{K}_H^\times/(\pm H \cdot A)$. 
Then the claim immediately follows from $(1)$. 
(We may also use the symplecticity of the image of $r$ 
mentioned in Theorem \ref{thm:magnus_properties} (2).) 

$(3)$ We use the homology cylinder $M_L \in \mathcal{C}_{2,1}$ 
in Example \ref{ex:string_comp}. 
While $M_L \notin \mathcal{IC}_{2,1}$, we 
can adjust it by some $g_1 \in \mathcal{M}_{2,1}$ so that 
$M_L \cdot g_1 \in \mathcal{IC}_{2,1}$. Since 
$\widehat{r}$ is trivial on $\mathcal{M}_{2,1}$, 
we have 
\[\widehat{r}  (M_L \cdot g_1) = 
\widehat{r} (M_L) =
\frac{\gamma_3 + \gamma_4-1}{\gamma_3^{-1}+\gamma_4^{-1}-1}
\in \mathcal{K}_H^\times/(\pm H).\] 
Take $f \in \mathcal{M}_{2,1}$ such that 
$\sigma (f) \in \mathrm{Sp}(4,\mathbb{Z})$ maps 
\[\gamma_1 \longmapsto \gamma_1 + \gamma_4, 
\quad \gamma_2 \longmapsto \gamma_2, 
\quad \gamma_3 \longmapsto \gamma_2 + \gamma_3, \quad 
\gamma_4 \longmapsto \gamma_4.\] 
Consider $f^m \cdot M_L \in \mathcal{C}_{2,1}$ 
and adjust it by 
some $g_m \in \mathcal{M}_{2,1}$ so that $f^m \cdot M_L \cdot g_m \in 
\mathcal{IC}_{2,1}$. 
Then we have 
\[\widehat{r} (f^m \cdot M_L \cdot g_m) = 
{}^{\sigma (f^m)} \widehat{r} (M_L)=
\frac{\gamma_2^m \gamma_3 + \gamma_4-1}
{\gamma_2^{-m}\gamma_3^{-1}+\gamma_4^{-1}-1} 
\in \mathcal{K}_H^\times/(\pm H).\] 
Since $\gamma_2^m \gamma_3 + \gamma_4-1$ is a degree $1$ polynomial 
with respect 
to the variable $\gamma_3$ and the coefficient of $\gamma_3$ is 
a monomial, we see that it is irreducible. 
By applying the involution, 
the irreducibility of $\gamma_2^{-m}\gamma_3^{-1}+\gamma_4^{-1}-1$ follows. 
It is easily checked that 
\begin{align*}
&\gamma_2^m \gamma_3 + \gamma_4-1 \neq 
\gamma_2^{-m}\gamma_3^{-1}+\gamma_4^{-1}-1 \\
&\gamma_2^m \gamma_3 + \gamma_4-1 \neq \gamma_2^k \gamma_3 + \gamma_4-1 
\quad (m \neq k)
\end{align*}
\noindent
as elements of $\mathcal{K}_H^\times/(\pm H)$ by considering 
the ratios among monomials, which are invariant under the multiplication of any element of $\pm H$. 
Therefore we conclude that 
the values 
\[\left\{\displaystyle\frac{\gamma_2^m \gamma_3 + \gamma_4-1}
{\gamma_2^{-m}\gamma_3^{-1}+\gamma_4^{-1}-1}\right\}_{m=0}^\infty\]
generate an infinitely generated subgroup of 
$\mathcal{K}_H^\times/(\pm H)$. This completes the proof when $g=2$. 
We can use the above computation for $g \ge 3$ by a stabilization.
\end{proof}
\noindent
From the above theorem, we observe that it seems not easy 
to find new abelian quotients of $\mathcal{H}_{g,1}$ by using 
the Magnus representation.

%----------Generalization to higher-dimensional cases--------------

\section{Generalization to higher-dimensional cases}\label{sec:higher}

In the remaining sections, we apply the argument in the previous section 
to higher dimensional cases and see that the determinant of the Magnus 
representation works well for them. 

For $k \ge 2$ and $n \ge 1$, we put 
\[X_n^k := \mathop{\#}_{n} (S^1 \times S^{k-1}).\]
The manifold $X_n^k$ may be regarded as a generalization 
of a closed surface since $X_n^2 = \Sigma_{n,0}$. 

Suppose $k \ge 3$. Then we have $\pi_1 (X_n^k) \cong 
\pi_1 (X_n^k - \mathrm{Int}\,D^k) \cong  F_n$, 
where $\mathrm{Int}\,D^k$ is an open $k$-ball. 
We choose a base point $p$ of $X_n^k - \mathrm{Int}\,D^k$ from 
the boundary and 
take an ordered basis $\{ \gamma_1, \gamma_2, \ldots, \gamma_n \}$ 
of $F_n$ (and $H_1:= H_1(F_n) \cong \mathbb{Z}^n$). 

Similarly to Lemma \ref{zsakasa:lem1}, we can check that 
\[i_{\pm}: H_\ast (X_n^k - \mathrm{Int}\,D^k, p \,;
i_{\pm}^\ast \mathcal{K}_{H_1 (M)}) \to 
H_\ast (M,p \,;\mathcal{K}_{H_1 (M)})\] 
are isomorphisms of 
right $\mathcal{K}_{H_1 (M)}$-vector spaces 
for any homology cylinder $(M,i_+,i_-)$ 
over $X_n^k - \mathrm{Int}\,D^k$, where 
$\mathcal{K}_{H_1}=\mathbb{Z}[H_1](\mathbb{Z}[H_1]-\{0\})^{-1}$. 
Hence we can define the Magnus representation 
\[r: \mathcal{C}(X_n^k - \mathrm{Int}\,D^k) 
\longrightarrow \mathrm{GL}(n,\mathcal{K}_{H_1})\]
by the same procedure as before. 
The map $r$ is a crossed homomorphism and induces 
$r:\mathcal{H} (X_n^k - \mathrm{Int}\,D^k) \to 
\mathrm{GL}(n,\mathcal{K}_{H_1})$. 
Consider the composition 
\[\widetilde{r}: \mathcal{H} (X_n^k - \mathrm{Int}\,D^k) 
\stackrel{r}{\longrightarrow} 
\mathrm{GL}(n,\mathcal{K}_{H_1}) 
\stackrel{\det}{\longrightarrow} 
\mathcal{K}_{H_1}^\times 
\longrightarrow 
\mathcal{K}_{H_1}^\times/(\pm H_1 \cdot A') \cong \mathbb{Z}^\infty,\]
where $A':=\{f^{-1} \cdot \varphi (f) \mid 
f \in \mathcal{K}_{H_1}^\times, \ 
\varphi \in \Aut (H_1) \}$, 
which gives a homomorphism. 

Now we mention 
the main result in the remaining sections. 
Note that we have a surjective homomorphism 
$\mathcal{H} (X_n^k -\mathrm{Int}\, D^k) \twoheadrightarrow 
\mathcal{H} (X_n^k)$ by gluing 
a small trivial cylinder along the boundary, which corresponds to 
capping the boundary of $X_n^k-\mathrm{Int}\, D^k$ by a $k$-ball $D^k$. 

\begin{theorem}\label{thm:acyMag}
For any $k \ge 3$ and $n \ge 2$, we have:
\begin{itemize}
\item[$(1)$] The image of the homomorphism $\widetilde{r}$ 
is an infinitely generated subgroup of $\mathbb{Z}^\infty$. In particular, 
$H_1 (\mathcal{H} (X_n^k -\mathrm{Int}\, D^k) )$ has 
infinite rank. 

\item[$(2)$] The homomorphism $\widetilde{r}$ factors through 
$\mathcal{H} (X_n^k)$. Therefore 
$H_1 (\mathcal{H} (X_n^k) )$ has infinite rank. 
\end{itemize}
\end{theorem}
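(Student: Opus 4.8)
The plan is to prove part $(1)$ by mimicking the genus $2$ computation from Section \ref{sec:abelian}, transported to the higher-dimensional setting, and then to derive part $(2)$ from the invariance of the Magnus representation under filling in the removed disk. For $(1)$, first I would construct an explicit homology cylinder $M$ over $X_n^k - \mathrm{Int}\,D^k$ whose Magnus matrix has a nontrivial determinant. The natural source is the string-link construction of Example \ref{ex:stringlink}: given a string link $L$ of $n$ strings in $D^k \times [0,1]$ (here $k \ge 3$ so one should use higher-dimensional ``string links'', i.e.\ properly embedded copies of $S^{k-1} \times [0,1]$ or arcs, with tubular neighborhoods removed and reglued), the complement glues onto $(X_n^k - \mathrm{Int}\,D^k) \times [0,1]$ to give an element of $\mathcal{C}(X_n^k - \mathrm{Int}\,D^k)$. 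Using Proposition \ref{prop:MagnusFormula} (whose proof only uses Lemma \ref{zsakasa:lem1}-type acyclicity, which holds here), one computes $\det r(M)$ from an admissible presentation of $\pi_1(M)$; I would arrange $L$ so that $\det r(M)$ equals, up to the $\pm H_1$-ambiguity, a ratio $\lambda / \overline{\lambda}$ with $\lambda$ an irreducible Laurent polynomial whose support is controlled, exactly as $\widehat{r}(M_L) = (\gamma_3 + \gamma_4 - 1)/(\gamma_3^{-1} + \gamma_4^{-1} - 1)$ appeared before.

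Next I would produce infinitely many such classes with $\widetilde{r}$-values that are $\mathbb{Z}$-linearly independent. As in the proof of Theorem in Section \ref{sec:abelian}$(3)$, I would act by self-diffeomorphisms of $X_n^k - \mathrm{Int}\,D^k$ realizing suitable elements of $\Aut(H_1) = \mathrm{GL}(n,\mathbb{Z})$ — note that here, since we quotient by $A'$ built from the \emph{full} automorphism group $\Aut(H_1)$, the relevant orbit structure is coarser than in the symplectic case, so I must check that the chosen polynomials $\lambda_m := \varphi_m(\lambda)$ still lie in distinct $\Aut(H_1)$-orbits modulo $\pm H_1$ and modulo $f \mapsto \overline f$. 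The crossed-homomorphism property of $r$ (Theorem \ref{thm:magnus_properties}$(1)$, which carries over verbatim) gives $\widehat r(\varphi_m \cdot M) = {}^{\sigma(\varphi_m)}\widehat r(M)$, and since $\widehat r$ is trivial on the image of the diffeotopy group (the underlying manifold of a mapping-class homology cylinder is a product, hence simple), one can freely adjust by diffeomorphisms. The key elementary check — the analogue of ``ratios among monomials are invariant under $\pm H$'' — is that an irreducible polynomial and its transforms remain pairwise inequivalent; this uses that $\Aut(H_1)$ preserves the number of monomials and that a generic choice of $\lambda$ (e.g.\ $\gamma_1 + \gamma_2 - 1$ pushed around) has all its $\mathrm{GL}(n,\mathbb{Z})$-translates distinguishable by the multiset of monomial ratios. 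Finally, the statement about $H_1(\mathcal{H}(X_n^k - \mathrm{Int}\,D^k))$ follows because $\widetilde r$ restricted to $\mathcal{IH}$ is a genuine homomorphism to an abelian group, and even the crossed homomorphism $\widehat r$ yields an abelian quotient after the standard reduction.

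For part $(2)$, the point is that capping off the boundary disk does not change the relevant homotopy-theoretic data: since $k \ge 3$, the inclusion $X_n^k - \mathrm{Int}\,D^k \hookrightarrow X_n^k$ induces an isomorphism on $\pi_1$ (both are $F_n$) and on $H_1$, and a homology cylinder over $X_n^k$ is obtained from one over $X_n^k - \mathrm{Int}\,D^k$ by gluing $D^k \times [0,1]$ along $(\partial D^k) \times [0,1]$ (a codimension-one handle attachment that, by general position in dimension $k+1 \ge 4$, does not affect $\pi_1$ or the $\mathcal{K}_{H_1}$-twisted homology). Thus there is a surjection $\mathcal{H}(X_n^k - \mathrm{Int}\,D^k) \to \mathcal{H}(X_n^k)$ — or more precisely a compatibility of the Magnus representations under this capping — through which $\widetilde r$ descends; I would verify that the admissible presentations used to compute $r$ can be chosen to be insensitive to whether the disk is present, so that $\widetilde r$ is literally pulled back from $\mathcal{H}(X_n^k)$. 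The infinite-rank conclusion for $H_1(\mathcal{H}(X_n^k))$ is then immediate from $(1)$.

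The main obstacle I anticipate is the higher-dimensional string-link construction and the resulting $\pi_1$-presentation: in dimension $k \ge 3$ one must be careful that removing and regluing tubular neighborhoods of the embedded $(k-1)$-spheres (or arcs) along a chosen framing actually produces a homology cylinder (condition (iv) of Definition \ref{def:HCgeneral}) and that one can read off an admissible presentation with controlled Fox derivatives — the geometry is less transparent than the $2$-dimensional picture of Figure \ref{fig:ex4_4}. A cleaner alternative, which I would pursue if the direct string-link approach proves awkward, is to bypass geometry entirely by invoking the group-theoretic description promised in Section \ref{sec:acyclic}: construct the desired classes inside $\Aut$ of the acyclic closure of $F_n$ via Theorem \ref{thm:acysurj} and Theorem \ref{thm:H1Acy}, and pull back along the epimorphism $\mathcal{H}(X_n^k - \mathrm{Int}\,D^k) \twoheadrightarrow \Aut(\AC{F_n})$. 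Either way, the remaining irreducibility-and-orbit-distinctness bookkeeping is routine, paralleling Section \ref{sec:abelian} almost verbatim.
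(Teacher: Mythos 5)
Your primary route has a structural flaw that reproduces, rather than circumvents, the triviality observed in the surface case. You aim to build $M$ with $\det r(M)$ equal (up to $\pm H_1$) to a ratio $\lambda/\overline{\lambda}$, modeled on the surface-case formula $\widehat{r}(M_L)=(\gamma_3+\gamma_4-1)/(\gamma_3^{-1}+\gamma_4^{-1}-1)$. But every such ratio dies in the target: since $-I_n \in \Aut(H_1)=\mathrm{GL}(n,\mathbb{Z})$ acts on $\mathcal{K}_{H_1}$ as the involution $f \mapsto \overline{f}$, we have $\lambda^{-1}\overline{\lambda}\in A'$ for every $\lambda\in\mathcal{K}_{H_1}^\times$, hence $\lambda/\overline{\lambda}\equiv 1$ in $\mathcal{K}_{H_1}^\times/(\pm H_1 \cdot A')$. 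This is precisely why the determinant of the Magnus representation gives nothing on $\mathcal{H}_{g,1}$ modulo $A$ (part $(2)$ of the theorem in Section \ref{sec:abelian}): in dimension $3$, torsion duality forces $\widehat{r}$ into ratio form. Your orbit-distinctness bookkeeping never gets a chance to run because the values are already zero. What makes the higher-dimensional statement true is that the Magnus determinant is no longer forced into this shape, and the paper exploits this by exhibiting explicit automorphisms $f_m$ of $F_n^{\mathrm{acy}}$ whose Magnus determinants are the polynomials $1-\gamma_2+\gamma_2^2-\cdots+\gamma_2^{2m}$ --- irreducible when $2m+1$ is prime, distinguishable by degree, and genuinely nontrivial modulo $\pm H_1\cdot A'$.

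Your ``cleaner alternative,'' pulling back along $\mathrm{Acy}$ via Theorems \ref{thm:acysurj} and \ref{thm:H1Acy}, \emph{is} the paper's actual proof and is the route that works; you should promote it from fallback to main argument. Your sketch of part $(2)$ is also missing the key step: granting a passage to $\mathcal{H}(X_n^k)$ (or, as the paper does, to $\mathrm{Out}(F_n^{\mathrm{acy}})$), you must still show $\widetilde{r}$ vanishes on inner automorphisms, not merely that capping off ``does not change the data.'' The paper's argument is via the universal property of the acyclic closure: the composite $F_n^{\mathrm{acy}}\to \mathrm{Inn}(F_n^{\mathrm{acy}})\to \mathcal{K}_{H_1}^\times/(\pm H_1\cdot A')$ restricts trivially to $F_n$ (because Magnus determinants of free-group automorphisms lie in $H_1$), and since the target is abelian, hence AC, this trivial map extends uniquely to the trivial map on all of $F_n^{\mathrm{acy}}$.
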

For the proof, which occupies Sections \ref{sec:acyclic} and 
\ref{sec:magnus2}, we use the action 
of $\mathcal{H} (X_n^k -\mathrm{Int}\, D^k)$ on the group called the 
{\it acyclic closure} of $F_n$. This action may be regarded as 
a generalization of the action of 
the diffeotopy group $\mathcal{M}(X_n^k -\mathrm{Int}\, D^k)$ on 
$F_n=\pi_1 (X_n^k -\mathrm{Int}\, D^k)$. 
Recall that the Magnus representation was originally defined for 
automorphisms of $F_n$ by using the Fox derivatives. 
The Magnus representation for homology cylinders we have seen is 
an extension of this representation by using 
twisted homology. In the following sections, we describe 
an equivalent definition of our Magnus representation by using 
the extended Fox derivatives first given by Le Dimet \cite{ld}.

%----------The acyclic closure of a group--------------

\section{The acyclic closure of a group}\label{sec:acyclic}

The notion of the acyclic closure (or HE-closure in \cite{le2}) 
of a group was defined as a variation of the algebraic closure of a group 
by Levine \cite{le1,le2}.  
We summarize here the 
definition and fundamental properties. 
We also refer to Hillman's book \cite{hi} and Cha's paper \cite{cha}. 

\begin{definition}
Let $G$ be a group, and let $F_m=\langle x_1, x_2, \ldots, x_m 
\rangle$ be a free group of rank $m$. \\
(i) $w=w(x_1,x_2, \ldots, x_m) \in G \ast F_m$, 
a word in $x_1, x_2, \ldots, x_m$ and elements of $G$, 
is said to be  {\it acyclic} if 
\[w \in \Ker \left( G \ast F_m \xrightarrow{\mathrm{proj}} F_m 
\longrightarrow H_1 (F_m) \right).\]
(ii) Consider the following ``equation'' with variables 
$x_1,x_2,\ldots,x_m$:
\[
\left\{\begin{array}{ccl}
x_1 & = & w_1(x_1, x_2, \ldots, x_m)\\
x_2 & = & w_2(x_1, x_2, \ldots, x_m)\\
& \vdots & \\
x_m & = & w_m(x_1, x_2, \ldots, x_m)
\end{array}\right. .
\]
When all words $w_1,w_2,\ldots,w_m \in G \ast F_m$ are 
acyclic, we call such an equation an {\it acyclic system} 
over $G$.\\
(iii) A group $G$ is said to be {\it acyclically closed} 
(AC, for short) if 
every acyclic system over $G$ with $m$ variables has 
a unique ``solution'' in $G$ for any $m \ge 0$, where 
a ``solution'' means a homomorphism $\varphi$ that makes 
the diagram 

\medskip
\hspace{100pt}
\SelectTips{cm}{}
\xymatrix{
G \ar[drr]^{\Id} \ar[d]& & \\
\displaystyle\frac{G \ast F_m}{\langle\!\langle x_1 w_1^{-1},\ldots, 
x_m w_m^{-1}\rangle\!\rangle} \ar[rr]_-\varphi & & G
}

\medskip
\noindent
commutative, where $\langle\!\langle x_1 w_1^{-1},\ldots, 
x_m w_m^{-1}\rangle\!\rangle$ denotes the normal closure in $G \ast F_m$. 
\end{definition}
\noindent
\begin{example}
Let $G$ be an abelian group. For $g_1,g_2,g_3 \in G$, 
consider the equation
\[\left\{\begin{array}{l}
x_1=g_1 x_1 g_2 x_2 x_1^{-1} x_2^{-1}\\
x_2=x_1 g_3 x_1^{-1}
\end{array}\right. ,
\]
which is an acyclic system. Then we have a unique solution 
$x_1=g_1 g_2 , \, x_2=g_3$.
\end{example}
\noindent
As we see from this example, all abelian groups are AC. 
Moreover, it is shown in \cite[Proposition 1]{le1} that AC groups are closed under 
taking intersections, direct products, central extensions, direct limits and inverse limits. 
In particular, all nilpotent groups are AC. 

Let us define the acyclic closure of a group. 
\begin{proposition}[{\cite[Proposition 3]{le1}}]
\label{prop:universality}
For any group $G$, there exists a pair of 
a group $G^{\mathrm{acy}}$ and 
a homomorphism $\iota_G:G \rightarrow G^{\mathrm{acy}}$ 
satisfying the following properties:
\begin{enumerate}
\item $G^{\mathrm{acy}}$ is an AC-group.
\item Let $f:G \rightarrow A$ be a homomorphism 
and suppose that $A$ is an AC-group. 
Then there exists a unique homomorphism 
$f^{\mathrm{acy}}:G^{\mathrm{acy}} \rightarrow A$ 
which satisfies $f^{\mathrm{acy}} \circ \iota_G = f$. 
\end{enumerate}
\noindent
Moreover such a pair is unique up to isomorphism.
\end{proposition}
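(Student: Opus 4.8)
The plan is to construct the pair $(\AC{G},\iota_G)$ by a transfinite iteration which, at each stage, adjoins a solution of every acyclic system over the current group and, simultaneously, identifies any two solutions of the same system; afterwards the universal property can be read off from the way the tower is built. Set $G_0:=G$. Given $G_\alpha$, the acyclic systems over $G_\alpha$ form a set $\mathcal{S}_\alpha$ of cardinality $\le\max(|G_\alpha|,\aleph_0)$, since each is a finite tuple of finite-length words in $G_\alpha$ and finitely many letters $x_i$. Let $G_{\alpha+1}$ be the quotient of $G_\alpha\ast\bigl(\ast_{s\in\mathcal{S}_\alpha}F_{m_s}\bigr)$ by the relations $x^s_i=w^s_i$ (making the $x^s_i$ a solution of $s$) together with all relations identifying distinct solutions of a single $s$, with $G_\alpha\to G_{\alpha+1}$ the evident map; for a limit ordinal $\lambda$ put $G_\lambda:=\varinjlim_{\alpha<\lambda}G_\alpha$. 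A cardinality count shows that for a regular cardinal $\kappa>\max(|G|,\aleph_0)$ one has $|G_\alpha|<\kappa$ for every $\alpha<\kappa$ and the tower stabilizes at $\kappa$ (the map $G_\kappa\to G_{\kappa+1}$ is an isomorphism). Set $\AC{G}:=G_\kappa$ and let $\iota_G$ be the composite $G=G_0\to G_\kappa$.

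For property~(1): an acyclic system over $G_\kappa=\varinjlim_{\alpha<\kappa}G_\alpha$ uses only finitely many elements of $G_\kappa$, which by regularity of $\kappa$ all come from some $G_\alpha$; since acyclicity of a word depends only on the exponent sums of its $x_i$ and not on its $G_\alpha$-part, the system is induced from an acyclic system over $G_\alpha$, a solution of which is adjoined at stage $\alpha+1$ and made unique by the identification relations, so $\AC{G}$ is AC. For property~(2): given $f\colon G\to A$ with $A$ AC, define $\AC{f}$ by transfinite induction; if $f_\alpha\colon G_\alpha\to A$ is already defined, then each acyclic system $s$ over $G_\alpha$ pushes forward to an acyclic system over $A$ with a \emph{unique} solution, and sending the $x^s_i$ to the coordinates of that solution is compatible with every defining relation of $G_{\alpha+1}$ — including the identification relations, precisely because the $A$-solution is unique — so $f_\alpha$ extends to $f_{\alpha+1}\colon G_{\alpha+1}\to A$; colimits at limit stages give $\AC{f}:=f_\kappa$ with $\AC{f}\circ\iota_G=f$. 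Any extension of $f$ must send the $x^s_i$ to \emph{a} solution over $A$, hence to the unique one, so it agrees with $\AC{f}$ on each $G_{\alpha+1}$ and therefore everywhere; this yields uniqueness of $\AC{f}$, and the usual comparison-map argument then gives uniqueness of $(\AC{G},\iota_G)$ up to isomorphism.

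The main obstacle is the uniqueness half of ``$\AC{G}$ is AC'': merely adjoining a solution does not make it unique, because a system may already have several solutions — for a transposition $\sigma$, the acyclic equation $x=\sigma x\sigma x^{-1}$ over $S_3$ (the exponent sum of $x$ is $0$) is solved by $x=e$ and by each of the two $3$-cycles. So the successor step must genuinely quotient by the identification relations, and one must check that these relations, imposed at every stage and carried along the tower, collapse \emph{all} solutions of a given system by the time one reaches $G_\kappa$, without destroying the solutions whose existence part~(1) relies on; this bookkeeping is the technical heart. It can be sidestepped structurally: the class of AC groups is closed under all small limits in the category of groups — products and inverse limits are known, and for an equalizer or a subgroup defined by equations one pushes a solution of an acyclic system through the defining homomorphisms and invokes uniqueness of solutions in the targets — and it satisfies a solution-set condition, since any $f\colon G\to A$ with $A$ AC factors through the smallest AC subgroup of $A$ containing $f(G)$ (it exists because $A$ itself is one such subgroup and AC groups are closed under intersection, and the cardinality estimate above bounds its order by $\max(|G|,\aleph_0)$). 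The general adjoint functor theorem then produces the reflection $G\mapsto\AC{G}$ with unit $\iota_G$, and (1), (2), and the uniqueness clause are exactly the assertion that AC groups form a reflective subcategory of groups.
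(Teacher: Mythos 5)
The paper gives no proof of this statement; it is cited directly from Levine \cite{le1}, so there is no internal argument to compare against. Your proposal is sound, and the obstacle you correctly identify --- that an acyclic system may already have several solutions, as your $S_3$ example shows --- is precisely why the identification relations are needed and why a naive ``adjoin a solution'' tower would not yield an AC group. Two remarks. First, in the direct construction the cardinality count alone does \emph{not} give stabilization at $\kappa$, since the bonding maps $G_\alpha \to G_{\alpha+1}$ need not be injective once you quotient by identifications; what actually proves that $G_\kappa \to G_{\kappa+1}$ is an isomorphism is that $G_\kappa$ is already AC, which is the argument you give under property (1). Moreover, that argument works at any limit ordinal, indeed already at $\omega$: an acyclic system over $G_\omega$ has finitely many coefficients, hence comes from some $G_n$ and acquires a solution in $G_{n+1}$, while any two $G_\omega$-solutions are represented in and become solutions over some $G_q$, where they are glued at stage $q+1$. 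So the ``bookkeeping'' you worry about does resolve, and the passage to a large regular cardinal is unnecessary. Second, your structural route via the general adjoint functor theorem is clean and correct: closure of AC groups under products and under equalizers (your push-forward-along-$f,g$-and-invoke-uniqueness argument is exactly right), together with the solution-set bound $\max(|G|,\aleph_0)$ obtained by running the simpler closure process inside the ambient AC target $A$ using intersections of AC subgroups, gives precisely the completeness and solution-set hypotheses needed, and the resulting reflection of the category of groups onto AC groups is the acyclic closure with its stated universal property and uniqueness.
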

\begin{definition}
We call $\iota_G$ (or $G^{\mathrm{acy}}$) obtained above 
the {\it acyclic closure} of $G$.
\end{definition}
\noindent
Taking the acyclic closure of a group is functorial, namely, 
for each group homomorphism $f:G_1 \to G_2$, we have 
the induced homomorphism $f^{\mathrm{acy}}:G_1^{\mathrm{acy}} \to 
G_2^{\mathrm{acy}}$ by applying the universal property of 
$G_1^{\mathrm{acy}}$ to the homomorphism 
$\iota_{G_2} \circ f$, and the composition of homomorphisms 
induces that of the corresponding homomorphisms 
on acyclic closures. 

The most important properties of the acyclic closure 
are the following, where a homomorphism is 
said to be 
{\it $2$-connected\/} if it induces an isomorphism on 
the first (group) homology 
and an epimorphism on the second homology. 

\begin{proposition}[{\cite[Proposition 4]{le1}}]\label{prop:2cn}
For any group $G$, the acyclic closure 
$\iota_G:G \rightarrow G^{\mathrm{acy}}$ is $2$-connected.
\end{proposition}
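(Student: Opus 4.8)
The plan is to use the explicit (transfinite) construction of the acyclic closure and to show that $2$-connectedness is inherited at each stage. Recall from Levine \cite{le1,le2} that $G^{\mathrm{acy}}$ is a direct limit $\varinjlim_\alpha G_\alpha$ with $G_0=G$, where a successor stage $G_{\alpha+1}$ is obtained from $G_\alpha$ by adjoining solutions to acyclic systems over $G_\alpha$, and limit stages are direct limits. Two easy functorial facts reduce the problem: the composite of two $2$-connected homomorphisms is $2$-connected (isomorphism $\circ$ isomorphism on $H_1$, epimorphism $\circ$ epimorphism on $H_2$), and since group homology commutes with filtered colimits, a direct limit of $2$-connected maps is $2$-connected. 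Hence it suffices to show each elementary extension $G_\alpha\to G_{\alpha+1}$ is $2$-connected, and by functoriality this reduces to the case of a single acyclic system: for a group $Q$ and an acyclic system $x_i=w_i(x_1,\dots,x_m)$ $(1\le i\le m)$, the natural map $Q\to Q':=(Q\ast F_m)/\langle\!\langle x_1w_1^{-1},\dots,x_mw_m^{-1}\rangle\!\rangle$ is $2$-connected.

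For this core assertion I would argue topologically. Let $Y$ be the $2$-complex built from a $K(Q,1)$ by wedging on $m$ circles (for $x_1,\dots,x_m$) and attaching $m$ $2$-cells along the loops $x_iw_i^{-1}$; then $\pi_1(Y)\cong Q'$. Put $Z:=K(Q,1)\vee S^1\vee\dots\vee S^1$, so that $\pi_1(Z)=Q\ast F_m$ and, being a wedge, $H_2(Z)=H_2(Q)$ and $H_1(Z)=H_1(Q)\oplus\mathbb{Z}^m$. Since $Y/Z\simeq\bigvee_m S^2$, the long exact sequence of the pair $(Y,Z)$ reads
\[
0\to H_2(Z)\to H_2(Y)\to\mathbb{Z}^m\xrightarrow{\ \partial\ }H_1(Z)\to H_1(Y)\to 0,
\]
with $\partial(e_i)$ the homology class of the attaching word $x_iw_i^{-1}$. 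Here the acyclicity of $w_i$ is used: it forces $[w_i]$ to lie in the summand $H_1(Q)$, so $\partial(e_i)=(-[w_i],e_i)\in H_1(Q)\oplus\mathbb{Z}^m$, whence $\partial$ is injective (its composite with the projection to $\mathbb{Z}^m$ is the identity) with cokernel isomorphic to $H_1(Q)$ via $\iota_Q$. The sequence then yields $H_1(Q)\xrightarrow{\cong}H_1(Y)=H_1(Q')$ and $H_2(Q)=H_2(Z)\xrightarrow{\cong}H_2(Y)$, the latter compatible with the inclusion $K(Q,1)\hookrightarrow Y$. Finally Hopf's theorem gives a surjection $H_2(Y)\twoheadrightarrow H_2(\pi_1Y)=H_2(Q')$, so $H_2(Q)\to H_2(Q')$ is onto. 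The identical argument, with the wedged circles and attached $2$-cells indexed by an arbitrary family of acyclic systems, covers a full successor stage.

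I expect the main obstacle to be organizing the construction of $G^{\mathrm{acy}}$ so that it is genuinely a transfinite composite of the adjunction moves above, i.e.\ handling the \emph{uniqueness} clause in the definition of an AC-group without introducing identification moves that might spoil $2$-connectedness. The acyclicity hypothesis is exactly what keeps this under control: for an acyclic system any two solutions $s,s'$ over $Q$ satisfy $\overline{s_i}=\overline{w_i(s)}=[w_i]=\overline{s_i'}$ in $H_1(Q)$ (the $H_1$-class of a solution depends only on the system), so a quotient identifying $s$ and $s'$ kills no $H_1$-class and $H_1$ is preserved; the remaining point is to control, via the five-term exact sequence, the effect on $H_2$ of killing the normally generated, $H_1$-trivial difference subgroup, and this $H_2$-bookkeeping for the identification moves is where I anticipate the real work, requiring the finer structure of solutions of acyclic systems rather than just their $H_1$-classes. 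Alternatively, following Levine one may arrange the whole tower to consist of adjunction moves alone, with uniqueness emerging as a consequence in the limit; in that case the first two paragraphs already give a complete proof.
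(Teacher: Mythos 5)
The paper itself does not prove this proposition; it cites it to Levine \cite[Proposition 4]{le1} and uses it as a black box, so there is no in-paper proof to compare against. Evaluating your argument on its own merits: the core topological computation in your second paragraph (the $2$-complex $Y$, the exact sequence of the pair $(Y,Z)$, acyclicity forcing $\partial$ to be split injective with cokernel $H_1(Q)$, and Hopf's theorem for $H_2$-surjectivity) is correct, and it is in fact the heart of the argument in Levine and in later expositions such as Cha \cite{cha} and Hillman \cite{hi}. The reduction in your first paragraph (composites and filtered colimits of $2$-connected maps are $2$-connected) is also fine.

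The genuine gap is exactly the one you flag in your last paragraph, and your ``alternatively'' hedge does not repair it: you have no argument that the limit of pure adjunction moves satisfies the \emph{uniqueness} clause, and I do not think it does without further work. What you can do, however, is finish the identification-move bookkeeping you started, since it is less delicate than you fear. Write $e_i = s_i^{-1}s_i'$ and $N = \langle\!\langle e_1,\dots,e_m\rangle\!\rangle$. In the five-term sequence for $1\to N\to Q\to Q/N\to 1$, surjectivity of $H_2(Q)\to H_2(Q/N)$ is equivalent to $N/[Q,N]\to H_1(Q)$ being injective; since $N\subset[Q,Q]$ that map is zero, so you need precisely $N=[Q,N]$. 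This follows from acyclicity by a commutator computation: expanding $e_i = w_i(s)^{-1}w_i(se)$ as a product of $Q$-conjugates $u_\nu e_{j_\nu}^{\epsilon_\nu}u_\nu^{-1}$ (one factor per occurrence of a variable in $w_i$), and working modulo $[Q,N]\supset[N,N]$, each such conjugate is congruent to $e_{j_\nu}^{\epsilon_\nu}$; grouping by $j$ and using that $w_i$ has vanishing exponent sum in every $x_j$ gives $e_i\equiv 1\pmod{[Q,N]}$, hence $e_i\in[Q,N]$ and $N=[Q,N]$. So the ``real work'' you anticipated is a one-paragraph extension of your own $H_1$-triviality observation rather than something requiring new ideas. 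As written, though, your proof stops short of this, and the alternative route you offer is unsupported, so the argument is incomplete. You should also note that the universal property needs to be checked for whichever tower you actually build; this is where consulting Levine's construction directly is unavoidable.
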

\begin{proposition}[{\cite[Proposition 5]{le1}}]\label{prop:isom}
Let $G_1$ be a finitely generated group and $G_2$ be 
a finitely presentable group. For each 
$2$-connected homomorphism 
$f:G_1 \rightarrow G_2$, the induced homomorphism 
$f^{\mathrm{acy}}:G_1^{\mathrm{acy}} 
\to G_2^{\mathrm{acy}}$ on acyclic closures is an isomorphism. 
\end{proposition}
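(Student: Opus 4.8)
The plan is to build a two-sided inverse of $f^{\mathrm{acy}}$ by comparing $G_1$ and $G_2$ through an auxiliary group obtained from $G_1$ by a ``group-theoretic surgery'', using the universal property of Proposition \ref{prop:universality} throughout. Fix a finite presentation $G_2=\langle y_1,\ldots,y_k\mid s_1,\ldots,s_l\rangle$. Since $f$ is surjective on $H_1$, each $y_i$ can be written in $G_1\ast F_k$ (evaluated to $G_2$ by $f$ on $G_1$ and by $x_j\mapsto y_j$) as $w_i=u_i\cdot c_i$, where $u_i$ is a word in the generators of $G_1$ and $c_i$ is a product of conjugates of commutators of words in the $x_j$; such a $w_i$ is acyclic and has image $y_i$ in $G_2$. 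First I would form
\[Q:=(G_1\ast F_k)\big/\langle\!\langle x_1w_1^{-1},\ldots,x_kw_k^{-1}\rangle\!\rangle,\]
which is finitely generated (as $G_1$ is) and fits into $G_1\xrightarrow{\,j\,}Q\xrightarrow{\,\pi\,}G_2$ with $\pi\circ j=f$ and $\pi$ surjective.

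Next I would do the homological bookkeeping. A short computation with the five-term exact sequence shows that adjoining $k$ generators together with $k$ acyclic relations makes $j$ a $2$-connected map; since $\pi\circ j=f$ is $2$-connected, it follows formally that $\pi$ is $2$-connected, and then the five-term sequence for $1\to N\to Q\to G_2\to 1$ forces $N:=\Ker\pi=[Q,N]$. Because $Q$ is finitely generated and $G_2$ is finitely presentable, $N$ is the normal closure in $Q$ of finitely many elements $n_1,\ldots,n_r$.

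The proof then reduces to two assertions: $j^{\mathrm{acy}}$ is an isomorphism, and $\pi^{\mathrm{acy}}$ is an isomorphism. For the first, I would push the acyclic system $\{x_i=w_i\}$ forward along $\iota_{G_1}$ and use its unique solution in the AC-group $G_1^{\mathrm{acy}}$ to construct a homomorphism $r\colon Q\to G_1^{\mathrm{acy}}$ with $r\circ j=\iota_{G_1}$; then $r^{\mathrm{acy}}$ is a left inverse of $j^{\mathrm{acy}}$, and comparing $\iota_Q(x_i)$ with the image under $j^{\mathrm{acy}}$ of that solution --- both being solutions of one and the same acyclic system over $Q^{\mathrm{acy}}$, hence equal by uniqueness --- shows $r^{\mathrm{acy}}$ is also a right inverse. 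For the second, the key point is that $\iota_Q(N)=1$: since $N=[Q,N]$, each $n_i$ can be rewritten as a product of conjugates (by elements of $Q$) of the $n_j^{\pm1}$ in which the total exponent of the $n_j$ is zero --- write $n_i=\prod_\nu[a_\nu,b_\nu]$ with $a_\nu\in Q$ and $b_\nu\in N$, expand each $b_\nu$ in conjugates of the $n_j^{\pm1}$, and observe that the commutator form cancels exponents. These rewritings give an acyclic system over $Q$ in variables $X_1,\ldots,X_r$ for which both $X_i=\iota_Q(n_i)$ and $X_i=1$ are solutions in $Q^{\mathrm{acy}}$, so uniqueness yields $\iota_Q(n_i)=1$ and hence $\iota_Q(N)=1$. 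Therefore $\iota_Q$ factors through $\pi$, and a routine chase of the universal property (using that $\pi$ is onto) promotes this factorization to mutually inverse maps between $Q^{\mathrm{acy}}$ and $G_2^{\mathrm{acy}}$. Composing, $f^{\mathrm{acy}}=\pi^{\mathrm{acy}}\circ j^{\mathrm{acy}}$ is an isomorphism.

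The hard part will be the identity $\iota_Q(N)=1$: the essential observation is that $N=[Q,N]$ is exactly what lets one express every normal generator with vanishing total exponent, thereby turning the trivial assignment into a bona fide solution of an acyclic system and collapsing $N$ via the uniqueness clause of the AC-property. The finiteness hypotheses enter precisely to guarantee that $Q$ is finitely generated and $N$ is normally finitely generated, so that the acyclic systems in play have finitely many variables; everything else is formal manipulation of the five-term exact sequence and of the universal property of the acyclic closure.
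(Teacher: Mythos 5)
The paper states this as Levine's Proposition~5 and does not reprove it, so there is no proof in the paper to compare against; your argument must be judged on its own and against Levine's. It is correct, and it reproduces the essential structure of Levine's proof: factor $f=\pi\circ j$ through $Q=(G_1\ast F_k)/\langle\!\langle x_iw_i^{-1}\rangle\!\rangle$, where $\{x_i=w_i\}$ is a finite acyclic system over $G_1$ presenting lifts of the generators of $G_2$; use the five-term sequence to see that $j$ (and hence $\pi$) is $2$-connected, so that $N=\Ker\pi$ satisfies $N=[Q,N]$ and is normally finitely generated; invert $j^{\mathrm{acy}}$ via the unique solution of the pushed-forward system in the AC-group $G_1^{\mathrm{acy}}$; and show $\iota_Q(N)=1$ by exhibiting both $\bigl(\iota_Q(n_i)\bigr)_i$ and $(1,\ldots,1)$ as solutions of one finite acyclic system over $Q^{\mathrm{acy}}$. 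Your closing remark about where the finiteness hypotheses enter is exactly right. One point worth making explicit (you gesture at it but don't quite say it): the uniqueness clause of the AC-property applies to systems over $G_1^{\mathrm{acy}}$ and $Q^{\mathrm{acy}}$, not over $G_1$ or $Q$, so in both places one must first push the system forward along $\iota_{G_1}$ (resp.\ $\iota_Q$) and then observe that the two candidate solutions solve that pushed-forward system; this is what makes the $j^{\mathrm{acy}}$-right-inverse step and the $\iota_Q(N)=1$ step legitimate.
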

\noindent
From Proposition \ref{prop:2cn} and Stallings' theorem \cite{st}, 
the nilpotent quotients of a group and 
those of its acyclic closure are isomorphic. 
Note that the homomorphism $\iota_G$ is not necessarily injective: 
consider a perfect group $G$ and the $2$-connected homomorphism 
$G \to \{1\}$. As for a free group $F_n$, 
its residual nilpotency shows 
that $\iota_{F_n}:F_n \to F_n^{\mathrm{acy}}$ is injective. 
We write $\gamma_i \in \Acy_n$ again for the image of $\gamma_i \in F_n$ by 
$\iota_{F_n}:F_n=\langle \gamma_1,\gamma_2,\ldots,\gamma_n \rangle 
\hookrightarrow \Acy_n$. 

Now we return to our discussion on homology cylinders. 
For each homology cylinder $(M,i_+,i_-) \in 
\mathcal{C}(X_n^k-\mathrm{Int}\,D^k)$, the homomorphisms 
$i_\pm : F_n =\pi_1 (X_n^k-\mathrm{Int}\,D^k) \to \pi_1 (M)$ are 
$2$-connected. Hence we have a commutative diagram
\[\begin{CD}
F_n @>i_->> \pi_1 (M) @<i_+<< F_n\\ 
@V\iota_{F_n}VV @V\iota_{\pi_1 (M)}VV @VV\iota_{F_n}V \\
\AC{F_n} @>i_-^{\mathrm{acy}}>\cong> \pi_1 (M)^{\mathrm{acy}} 
@<i_+^{\mathrm{acy}}<\cong< \AC{F_n} 
\end{CD}\]
by Proposition \ref{prop:isom}. 
From this, we obtain a monoid homomorphism 
\[\mathrm{Acy}: \mathcal{C} (X_n^k-\mathrm{Int}\,D^k) 
\longrightarrow \mathrm{Aut}\, (F^{\mathrm{acy}}_n) \]
defined by $\mathrm{Acy} (M,i_+,i_-) = 
(i_+^{\mathrm{acy}})^{-1} \circ i_-^{\mathrm{acy}}$ and 
we can check that it induces a group homomorphism
\[\mathrm{Acy}: \mathcal{H} (X_n^k-\mathrm{Int}\,D^k) 
\longrightarrow \mathrm{Aut}\, (F^{\mathrm{acy}}_n).\] 
For homology cylinders over the closed manifold $X_n^k$, we have similar homomorphisms 
\begin{align*}
&\mathrm{Acy}: \mathcal{C} (X_n^k) 
\longrightarrow \mathrm{Out}\, (F^{\mathrm{acy}}_n),\\
&\mathrm{Acy}: \mathcal{H} (X_n^k) 
\longrightarrow \mathrm{Out}\, (F^{\mathrm{acy}}_n)
\end{align*}
\noindent
using the outer automorphism group 
$\mathrm{Out}\, (F^{\mathrm{acy}}_n):=\mathrm{Aut}\, (F^{\mathrm{acy}}_n)/
\mathrm{Inn}\, (F^{\mathrm{acy}}_n)$ of $\Acy_n$.

\begin{theorem}\label{thm:acysurj}
For any $k \ge 3$ and $n \ge 2$, the homomorphisms 
\begin{align*}
&\mathrm{Acy}: \mathcal{H} (X_n^k -\mathrm{Int}\, D^k) 
\longrightarrow \mathrm{Aut}\, (F^{\mathrm{acy}}_n), \\
&\mathrm{Acy}: \mathcal{H} (X_n^k) 
\longrightarrow \mathrm{Out}\, (F^{\mathrm{acy}}_n)
\end{align*} 
are surjective. 
\end{theorem}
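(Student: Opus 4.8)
The plan is to realize an arbitrary automorphism $\psi \in \Aut(\Acy_n)$ (resp.\ an outer automorphism) by an explicit homology cylinder, using the fact (Proposition~\ref{prop:isom}) that a $2$-connected homomorphism between reasonable groups induces an isomorphism on acyclic closures. First I would recall that the acyclic closure is \emph{locally finitely presentable in the relevant sense}: although $\Acy_n$ itself need not be finitely presentable, an automorphism of it is determined by (and can be prescribed by) finitely much data, since $\iota_{F_n}(\gamma_i) = \gamma_i$ generate $\Acy_n$ ``up to acyclic systems''. So I would start from $\psi$ and write down words $\psi(\gamma_1),\dots,\psi(\gamma_n)$ in $\Acy_n$; each lies in the acyclic-system closure of $F_n$, hence can be captured by finitely many auxiliary generators $z_1,\dots,z_l$ satisfying acyclic relations. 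The aim is to build a $(k+1)$-manifold $M$ with $\pi_1(M)$ presented exactly by these generators and relations, together with two maps $i_\pm$ from $X_n^k - \Int D^k$ inducing the two natural inclusions of $F_n$.

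The key construction step is standard surgery-theoretic ``adding handles to a cylinder'': start with $(X_n^k - \Int D^k) \times [0,1]$, attach $1$-handles along the $i_-$ boundary to create the extra generators $z_j$, then attach $2$-handles to realize the relations $r_j$. Because $k \ge 3$, there is plenty of room: $2$-handles can be attached along embedded framed circles (general position gives embeddedness for $k+1 \ge 4$), and the self-intersection/linking obstructions that would appear for surfaces simply vanish. One has to check that after these handle attachments the two inclusions $i_\pm \colon X_n^k - \Int D^k \hookrightarrow \partial M$ still induce \emph{homology} isomorphisms on $M$ --- this is where ``acyclic'' is used: the acyclicity of the words $w_j$ (equivalently, that the relation matrix is invertible over $\Z[H_1]$, and in fact over $\Z$ after abelianization) guarantees that the $1$- and $2$-handles cancel homologically, so $H_*(i_\pm)$ is an isomorphism and $(M,i_+,i_-)$ is genuinely a homology cylinder. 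Then by construction $\pi_1(M)^{\mathrm{acy}} \cong \Acy_n$ via both $i_\pm^{\mathrm{acy}}$, and $(i_+^{\mathrm{acy}})^{-1} \circ i_-^{\mathrm{acy}} = \psi$, giving surjectivity of $\mathrm{Acy}$ on $\mathcal{C}$ and hence on $\mathcal{H}$. For the closed case $X_n^k$, one reglues the removed $D^k$ (using $k \ge 3$ so that $\pi_1$ is unaffected and $H_*$ is controlled), and the ambiguity of how to fill in exactly matches the passage from $\Aut$ to $\Out$.

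The main obstacle --- and the point that needs the most care --- is the homological bookkeeping showing that the constructed $M$ satisfies condition (iv) of Definition~\ref{def:HCgeneral}, i.e.\ that $i_\pm$ are homology isomorphisms with $\mathcal{K}_{H_1}$ (or even $\Z$) coefficients. Concretely: the acyclic system prescribing $\psi$ gives a presentation of $\pi_1(M)$ whose ``relation matrix'' (the Jacobian of the $r_j$ in the $z_i$, evaluated in $\Z$) must be invertible, and one must verify that the standard handle-cancellation argument then forces $H_*(M, i_\pm(X_n^k - \Int D^k)) = 0$. I expect this to reduce, via Lemma~\ref{zsakasa:lem1}-type reasoning and the explicit chain-level description, to the invertibility statement built into the definition of an acyclic system --- but making the chain complex of $M$ relative to the image of $i_-$ match the abelianized acyclic system, and checking it is acyclic, is the real content. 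A secondary (more routine) point is arranging the embeddings and framings for handle attachment and verifying the boundary parametrization conditions (i)--(iii); these are harmless for $k \ge 3$ by general position, but must be stated.
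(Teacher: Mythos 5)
Your broad strategy — realize $\psi\in\Aut(\Acy_n)$ by a manifold built from handle attachments guided by an acyclic system, then do homological bookkeeping to confirm the result is a homology cylinder — is in the same spirit as the paper's argument. However, there is a genuine gap in the proposed starting point, and a second subtlety you do not address.

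First, you propose to start with the trivial cylinder $(X_n^k - \mathrm{Int}\,D^k)\times [0,1]$ and add handles. In that manifold the two boundary parametrizations $i_+$ and $i_-$ induce the \emph{same} map $F_n\to \pi_1$ (the product collar identifies $i_+(\gamma_j)$ with $i_-(\gamma_j)$ rel.\ basepoint). Interior surgeries factor through the original $\pi_1$, so this identification persists no matter which $1$-- and $2$--handles you attach; the resulting $\mathrm{Acy}(M)$ is always the identity. The paper instead begins with $M_0 = \mathop{\natural}_{2n}(S^1\times D^k)$, which also has $\partial M_0 = X_{2n}^k$, but now $\pi_1(M_0)\cong F_{2n}$ and $i_+$, $i_-$ give the two \emph{disjoint} inclusions $F_n\hookrightarrow F_{2n}$. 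Only from that starting configuration can one introduce relations identifying $i_-(\gamma_j)$ with a word realizing $\psi(\gamma_j)$; the automorphism is encoded in the reference map $\Phi:M_0\to K(\Acy_n,1)$ with $\Phi\circ i_+ = \iota_{F_n}$ and $\Phi\circ i_- = \psi\circ\iota_{F_n}$, and one then kills $\Ker(\Phi_*:H_1(M_0)\to H_1(\Acy_n))\cong\mathbb{Z}^n$ by surgery over $K(\Acy_n,1)$.

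Second, even after correcting the starting manifold, your single-shot ``write down one acyclic presentation and build $M$'' plan misses the step that actually uses the acyclic closure in an essential way. When killing a class $\alpha\in\Ker\Phi$ one needs a simple closed curve $C$ with $\Phi(C)=1\in\Acy_n$, not merely $\Phi(C)$ null-homologous. Since $\Acy_n$ is not finitely generated (Theorem~\ref{thm:infinite}), the commutators witnessing the nullhomology of $\Phi(C)$ need not lie in the image of $\pi_1(M_0)$, so no such $C$ may exist at that stage. The paper's proof handles this by a preliminary homology bordism: first attach $1$-- and $2$--handles realizing an acyclic system over $\pi_1(M_0)$ whose solution supplies the needed elements $h_{ij}\in\Acy_n$, obtaining $M_{0.5}$ homology bordant over $K(\Acy_n,1)$, and only then attach the killing $2$--handle. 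This iterative Case 1/Case 2 structure is the technical heart of the theorem; your proposal identifies the homological verification as the main point, but the prior issue of even being able to choose the attaching curves at all is where the acyclic systems do the real work. Your treatment of the $k\geq3$ general-position claims and the passage from $\Aut$ to $\Out$ for the closed $X_n^k$ are fine as sketched.
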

\begin{proof}
It suffices to show the surjectivity of the upper one. 
Given an element $\varphi \in \Aut (\Acy_{n})$, 
we produce a homology cylinder $M=(M,i_+,i_-) \in 
\mathcal{H} (X_n^k -\mathrm{Int}\, D^k)$ satisfying 
$\mathrm{Acy}(M)=\varphi$. 
The construction below is based on the argument in 
Garoufalidis-Levine \cite[Theorem 3]{gl}
and its generalization in \cite[Theorem 6.1]{sa2}. 

First we take two continuous maps 
$f_+, f_-:X_n^k -\mathrm{Int}\, D^k \to K(\Acy_n,1)$ 
corresponding to homomorphisms 
$\iota_{F_n}, \varphi \circ \iota_{F_n}:F_n \to \Acy_n$, 
respectively. 
Since $\partial (X_n^k-\mathrm{Int}\, D^k)=S^{k-1}$ is simply connected, 
we can combine these maps and 
obtain a map $f:=f_+ \cup f_- : 
X_{2n}^k=(X_n^k -\mathrm{Int}\, D^k) \cup (-(X_n^k -\mathrm{Int}\, D^k)) 
\to K(\Acy_n,1)$. 
Let $i_+, i_-: X_n^k -\mathrm{Int}\, D^k \to X_{2n}^k$ be the corresponding 
embeddings onto the domains of $f_+$ and $f_-$. 
The manifold $X_{2n}^k$ is the boundary of 
$M_0:=\mathop{\natural}_{2n} S^1 \times D^k$, 
the boundary connected sum of $2n$ copies of 
$S^1 \times D^k$. 
Since $\pi_1 (M_0) \cong \pi_1 (X_{2n}^k)$, we can extend $f$ to 
the continuous map $\Phi: M_0 \to K(\Acy_n,1)$. 
Note that $H_1 (M_0) \cong \mathbb{Z}^{2n}$ and 
$H_i (M_0) = 0$ for all $i \ge 2$. 

Since $f \circ i_+=\iota_{F_n}: F_n \to \Acy_n$ induces an isomorphism 
on the first homology, we have 
$H_1 (M_0) \cong i_+(H_1 (X_n^k-\mathrm{Int}\, D^k)) 
\oplus \Ker \Phi$. 
To obtain a homology cylinder satisfying 
$\mathrm{Acy}(M)=\varphi$, 
we perform surgery to $M_0$ to kill $\Ker \Phi \cong \mathbb{Z}^n$ 
with keeping $\Phi$ on $X_{2n}^k=\partial M_0$. 
Take an element $\alpha \in H_1 (M_0)$ from a basis of 
$\Ker \Phi$. 

(Case 1) Suppose there exists a representative $C \in \pi_1 (M_0)$ of $\alpha$ 
by a simple closed curve with $\Phi (C)=1 \in \Acy_n$. 
Let $W_1$ be the $(k+2)$-manifold obtained 
from $M_0 \times [0,1]$ by attaching a $2$-handle $S^1 \times D^{k+1}$ 
to $M_0 \times \{ 1 \} \subset \partial (M_0 \times [0,1])$ with any framing. 
We have 
\[\pi_1 (W_1)=\pi_1 (M_0)/ \langle\!\langle C \rangle\!\rangle,\]
where $\langle\!\langle C \rangle\!\rangle$ denotes 
the normal closure of the subgroup generated by $C$. 
The relative chain complex $C_\ast (W_1, M_0 \times [0,1])$ 
associated with the handle decomposition has only one generator 
in degree $2$ and its homology class hits $\alpha \in H_1 (M_0 \times [0,1]) 
\cong H_1 (M_0)$. Therefore 
$H_1 (W_1) \cong H_1 (M_0)/ \langle \alpha \rangle \cong \mathbb{Z}^{2n-1}$ and 
$H_i (W_1) \cong H_i (M_0)$ if $i \neq 1$.  
Since $\Phi (C)=1$, we can extend $\Phi$ to $W_1$. 
We write $\Phi: W_1 \to K(\Acy_n,1)$ again for the extension. 
 
Consider $W_1$ to be a cobordism between $M_0=M_0 \times \{ 0 \}$ 
and a new manifold $M_1$. That is, $\partial W_1=M_0 \cup (-M_1)$. 
By duality, the cobordism $W_1$ is obtained from $M_1 \times [0,1]$ 
by attaching a $k$-handle. Since $k \ge 3$, it follows that 
$H_1 (M_1) \cong H_1 (W_1)$ 
with $\Ker \Phi|_{M_1} \cong \mathbb{Z}^{n-1}$. 

(Case 2) Suppose there does not exist a representative 
$C \in \pi_1 (M_0)$ of $\alpha$ 
by a simple closed curve with $\Phi (C)=1 \in \Acy_n$. 
In this case, we replace $(M_0,i_+,i_-)$ by 
another manifold $(M_{0.5},i_+,i_-)$ 
which is homology bordant to $M_0$ over $K(\Acy_n,1)$ 
and for which we can take a simple closed curve 
representing $\alpha \in H_1 (M_{0.5}) \cong H_1 (M_0)$ 
and its image by $\Phi$ is trivial in $\Acy_n$. 
Then we can apply the same argument as Case 1 to $M_{0.5}$. 

Such a manifold $M_{0.5}$ is given as follows. 
The homomorphism $i_+:F_n \to \pi_1 (M_0)$ induces 
a homomorphism $i_+^{\mathrm{acy}}:\Acy_n \to 
\pi_1 (M_0)^{\mathrm{acy}}$ satisfying $i_+^{\mathrm{acy}} \circ 
\iota_{F_n}=\iota_{\pi_1 (M_0)} \circ i_+$. Similarly we 
have $\Phi^{\mathrm{acy}}:\pi_1 (M_0)^{\mathrm{acy}} \to \Acy_n$ 
satisfying $\Phi=\Phi^{\mathrm{acy}} \circ \iota_{\pi_1 (M_0)}$. Then 
\[\Phi^{\mathrm{acy}} \circ i_+^{\mathrm{acy}} \circ \iota_{F_n} 
= \Phi^{\mathrm{acy}} \circ \iota_{\pi_1 (M_0)} \circ i_+ = 
\iota_{F_n}.\] 
By the universality of the acyclic closure, we have 
$\Phi^{\mathrm{acy}} \circ i_+^{\mathrm{acy}} 
= \Id_{\Acy_n}$. In particular, $\Phi^{\mathrm{acy}}$ is onto.

Take a simple closed curve $C$ representing $\alpha \in 
\Ker \Phi$. Since $\Phi (\alpha)=0 \in H_1 (\Acy_n)$, 
we can write $\Phi (C)=\prod_{i=1}^{l} [h_{i1},h_{i2}]$ 
with $h_{ij} \in \Acy_n$. 
We take an acyclic system 
\[S \, : \, x_i=w_i (x_1,x_2,\ldots,x_m) \quad (i=1,2,\ldots,m)\]
over $\pi_1 (M_0)$ whose solution in $\pi_1 (M_0)^{\mathrm{acy}}$ 
includes 
\[\{ i_+^{\mathrm{acy}}(h_{11}), i_+^{\mathrm{acy}}(h_{12}),\ldots, 
i_+^{\mathrm{acy}}(h_{l1}), i_+^{\mathrm{acy}}(h_{l2}) \}.\]

We attach a $1$-handle to $M_0 \times \{ 1 \} \subset 
\partial (M_0 \times [0,1])$ 
for each variable $x_i$ and write $x_i$ again for the added 
generator on the fundamental group of the resulting cobordism. 
We also attach a $2$-handle along 
the loop $x_i w_i^{-1}$ for each $i=1,2,\ldots,m$ with any framing. 
We denote the resulting cobordism by $W_{0.5}$. Then 
\[\pi_1 (W_{0.5})=(\pi_1 (M) \ast \langle x_1,x_2,\ldots,x_m \rangle) 
\big/ \langle\!\langle x_1 w_1^{-1}, x_2 w_2^{-1}, \ldots,x_m w_m^{-1} \rangle\!\rangle.\]
We define a homomorphism 
$\Phi_S: \pi_1 (W_{0.5}) \to \pi_1 (M_0)^{\mathrm{acy}}$ which lifts 
$\iota_{\pi_1 (M_0)}$ by sending $x_i$ to 
the corresponding solution of $S$. 
The composite $\Phi^{\mathrm{acy}} \circ \Phi_S:
\pi_1 (W_{0.5}) \to \Acy_n$ 
induces a continuous map $\Phi :W_{0.5} \to K(\Acy_n,1)$ which 
extends $\Phi:M_0 \to K(\Acy_n,1)$. 

The relative chain complex $C_\ast (W_{0.5}, M_0 \times [0,1])$ given by 
the handle decomposition has 
its non-trivial part in degree $1$ and $2$ generated by the above 
newly added handles. The acyclicity of the system $S$ says that 
the boundary of the $2$-handle associated with 
the relation $x_i w_i^{-1}$ is of the form 
\[[x_i]+(\mbox{1-handles in $M_0 \times [0,1]$}) \equiv [x_i] \in 
C_1 (W_{0.5}, M_0\times [0,1]).\]
Therefore $H_\ast (W_{0.5}, M_0 \times [0,1])\cong 
H_\ast (W_{0.5},M_0) =0$ holds. 

Consider $W_{0.5}$ to be a cobordism between 
$M_0=M_0 \times \{0\}$ and a new manifold $M_{0.5}$. 
The dual handle decomposition of $W_{0.5}$ is obtained 
from $M_{0.5} \times [0,1]$ by attaching 
$k$- and $(k+1)$-handles. This shows that the inclusion 
$M_{0.5} \hookrightarrow W_{0.5}$ induces an isomorphism 
$\pi_1 (M_{0.5}) \cong \pi_1 (W_{0.5})$. 
By the Poincar\'e-Lefschetz duality, 
$H_\ast (W_{0.5},M_{0.5}) \cong H^{(k+2)-\ast} (W_{0.5},M_0) =0$. 
Therefore we see that $M_0$ and $M_{0.5}$ are homology 
bordant over $K(\Acy_n,1)$ by 
the bordism $W_{0.5}$ and $\Phi$. Note that this bordism preserves 
the direct sum decomposition 
$H_1 (M_0) \cong i_+(H_1 (X_n^k-\mathrm{Int}\, D^k)) 
\oplus \Ker \Phi$, namely 
we also have $H_1 (M_{0.5}) \cong i_+(H_1 (X_n^k-\mathrm{Int}\, D^k)) 
\oplus \Ker \Phi$ 
and we can take $\overline{\alpha} \in \Ker \Phi \subset H_1 (M_{0.5})$ 
which corresponds to $\alpha$. 

Recall the simple closed curve $C$ taken at the beginning of 
this argument. Since $\pi_1 (M_{0.5}) \to \pi_1 (W_{0.5})$ is an isomorphism, 
there exists a simple closed curve $\overline{C} \subset M_{0.5}$ 
which attains $C$ in $\pi_1 (W_{0.5})$. Now $h_{ij} \in \Acy_n$ are 
in the image of $\Phi_S:\pi_1 (W_{0.5}) \to \Acy_n$, so that we 
can take $\overline{h_{ij}} \in \pi_1 (M_{0.5})$ attaining $h_{ij}$. 
Then the simple closed curve $\overline{C} \left( 
\prod_{i=1}^l [\overline{h_{i1}},\overline{h_{i2}}] \right)^{-1}$ 
represents $\overline{\alpha}$ and is mapped by $\Phi$ 
to the trivial element of $\Acy_n$. The manifold $M_{0.5}$ and 
the map $\Phi: M_{0.5} \to K(\Acy_n,1)$ are what we are looking for 
in this case.

\bigskip
By iterating the above procedure, we succeed in killing 
$\Ker \Phi \subset H_1 (M_0)$ with 
keeping $f=\Phi|_{X_{2n}^k}: X_{2n}^k=\partial M_0 \to K(\Acy_n,1)$ unchanged. 
That is,  we get a manifold $M_n$ 
which is bordant to $M_0$ over $K(\Acy_n,1)$ by a bordism $W_n$ 
and a map $\Phi:W_n \to K(\Acy_n,1)$ such that 
the kernel of $\Phi|_{M_n}: H_1 (M_n) \to H_1(K(\Acy_n,1))$ is trivial. 
Since $\Phi \circ i_+=\iota_{F_n}$, 
the maps 
$i_+:H_1 (X_n^k-\mathrm{Int}\, D^k) \to H_1 (M_n)$ and 
$\Phi:H_1 (M_n) \to H_1 (K(\Acy_n,1))$ are isomorphisms. 

Let us show that $(M_n,i_+,i_-)$ is a homology cylinder over 
$X_n^k-\mathrm{Int}\, D^k$. 
The bordism $W_n$ is obtained from $M_0 \times [0,1]$ by attaching 
$1$-and $2$-handles with the number of $2$-handles greater than that of 
$1$-handles by $n$. 
The dual handle decomposition is obtained from $M_n \times [0,1]$ by attaching 
their dual $k$-and $(k+1)$-handles. Therefore we have 
\[\chi (M_n)+(-1)^k n = \chi (W_n) = \chi (M_0)+n =1-n,\]
where $\chi (\cdot)$ denotes the Euler characteristic, and 
\[H_i (M_n) \cong H_i (W_n) \cong H_i(M_0) =0\]
if $2 \le i \le k-2$. 
Since $M_n$ is a compact $(k+1)$-dimensional manifold with non-empty boundary, 
it is homotopy equivalent to a $k$-dimensional CW-complex. Hence 
$H_i (M_n) =0$ for $i \ge k+1$ and $H_k (M_n)$ is free. 
The inclusion $i_+:X_n^k-\mathrm{Int}\, D^k \hookrightarrow M_n$ 
is decomposed to 
$X_n^k-\mathrm{Int}\, D^k \hookrightarrow \partial M_n 
\hookrightarrow M_n$, which shows 
that $H_1 (M_n,\partial M_n)=0$. By the Poincar\'e-Lefschetz duality, 
we have $H^k (M_n) \cong H_1 (M_n,\partial M_n)=0$. It follows that 
$H_k (M_n)=0$ and $H_{k-1} (M_n)$ is free. Comparing with $\chi (M_n)=
1-n+(-1)^{k-1} n$, we have $H_{k-1} (M_n) \cong \mathbb{Z}^n$. 
Finally we check that $i_+:H_{k-1} (X_n^k-\mathrm{Int}\, D^k) 
\to H_{k-1} (M_n)$ is an 
isomorphism. The source and target are both 
isomorphic to $\mathbb{Z}^n$. 
The homomorphism $\varphi \circ \iota_{F_n}:\pi_1 (X_n^k-\mathrm{Int}\, D^k) 
\to \Acy_n$ is 
$2$-connected and 
factors through $\pi_1 (X_n^k-\mathrm{Int}\, D^k) 
\xrightarrow{i_-} \pi_1 (M_n) 
\xrightarrow{\Phi} \Acy_n$. From this, we see that 
$H_1 (M_n, i_-(X_n^k-\mathrm{Int}\, D^k))=0$. 
The Poincar\'e-Lefschetz duality shows that 
\begin{align*}
H_k (M_n,i_+(X_n^k-\mathrm{Int}\, D^k)) 
&\cong H^1 (M_n, i_-(X_n^k-\mathrm{Int}\, D^k))\\
&\cong 
\Hom (H_1 (M_n, i_-(X_n^k-\mathrm{Int}\, D^k)), 
\mathbb{Z})=0
\end{align*}
\noindent
and it follows that $H_{k-1} (M_n,i_+(X_n^k-\mathrm{Int}\, D^k))$ 
is a torsion module. 
In fact, it is trivial because the universal coefficient theorem and 
the Poincar\'e-Lefschetz duality say that 
\[H_{k-1} (M_n,i_+(X_n^k-\mathrm{Int}\, D^k)) \cong 
H^k (M_n,i_+(X_n^k-\mathrm{Int}\, D^k)) 
\cong H_1 (M_n, i_-(X_n^k-\mathrm{Int}\, D^k)) =0.\]
Hence $M:=(M_n,i_+,i_-)$ is a homology cylinder over 
$X_n^k-\mathrm{Int}\, D^k$. Now we have a commutative diagram:

\medskip
\hspace{90pt}
\SelectTips{cm}{}
\xymatrix{
X_n^k-\mathrm{Int}\, D^k 
\ar[r]^{f_-} 
\ar[dr]_{i_-} & K(\Acy_n,1) & 
X_n^k-\mathrm{Int}\, D^k 
\ar[l]_{f_+} 
\ar[dl]^{i_+} \\
& M_n \ar[u]^\Phi & 
}

\medskip
\noindent
From this diagram, we see that 
\[\mathrm{Acy}(M)=(i_+^{\mathrm{acy}})^{-1} \circ 
i_-^{\mathrm{acy}}=(f_+^{\mathrm{acy}})^{-1} \circ 
f_-^{\mathrm{acy}}=\varphi.\]
This completes the proof. 
\end{proof}

\begin{remark}
Theorem \ref{thm:acysurj} is considered to be an analogue of a part of 
Laudenbach's theorem \cite[Th\'eor\`eme 4.3]{laudenbach} that the natural action of 
the diffeotopy group $\mathcal{M} (X_n^3-\mathrm{Int}\, D^3)$ on 
$\pi_1 (X_n^3-\mathrm{Int}\, D^3)=F_n$ gives 
an epimorphism $\mathcal{M} (X_n^3-\mathrm{Int}\, D^3) 
\twoheadrightarrow \Aut \, (F_n)$, where 
the same statement holds for $X_n^k-\mathrm{Int}\, D^k$ with $k \ge 4$. 
\end{remark}

\begin{remark}
For homology cylinders over a surface $\Sigma_{g,1}$, 
we can also define a homomorphism 
$\mathrm{Acy}: \mathcal{H}_{g,1} \to \Aut \,(\AC{F_{2g}})$, 
where $F_{2g}= \pi_1 (\Sigma_{g,1})$. 
In this case, however, it was shown in 
\cite[Theorem 6.1]{sa2} that $\mathrm{Acy}$ is {\it not\/} 
surjective. In fact, the image is given by
\[\{ \varphi \in \Aut (\AC{F_{2g}}) \mid
\varphi(\zeta) = \zeta \in \AC{F_{2g}} \},\]
where $\zeta \in F_{2g} \subset \AC{F_{2g}}$ 
is the word corresponding to the boundary loop of $\Sigma_{g,1}$. 
This may be regarded as an analogue of the Dehn-Nielsen theorem 
for the action of the mapping class group $\mathcal{M}_{g,1}$ on 
$F_{2g}$. 
\end{remark}

%----------The Magnus representation revisited--------------

\section{The Magnus representation revisited}\label{sec:magnus2}

Now we give an alternative description of the Magnus representation 
for homology cylinders and use it to finish the proof of 
Theorem \ref{thm:acyMag}. 
For that, we recall the extended free derivatives 
originally defined by Le Dimet \cite{ld}. 
Precisely speaking, the derivatives given below are 
a reduced version to commutative rings. 

Let $\{\gamma_1, \gamma_2, \ldots, \gamma_n\}$ be a basis of a 
free group $F_n$. 
The definition of the extended free derivatives is derived 
from the following lemma. The proof is almost the same as that 
of \cite[Proposition 1.1]{ld}. 

\begin{proposition}\label{extfree} 
The homomorphism
\[\chi: \mathcal{K}_{H_1}^n \longrightarrow 
I(\Acy_n) \otimes_{\Z [\Acy_n]} \mathcal{K}_{H_1}\]
sending $(a_1,\ldots,a_n)^T \in \mathcal{K}_{H_1}^n$ to \ 
$\displaystyle\sum_{i=1}^n (\gamma_i^{-1}-1) \otimes a_i$ 
is a right $\mathcal{K}_{H_1}$-isomorphism, where 
$I(\Acy_n)$ is the kernel of the trivializer 
$\mathbb{Z} [\Acy_n] \to \mathbb{Z}$.
\end{proposition}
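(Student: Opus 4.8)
The plan is to imitate the proof of \cite[Proposition 1.1]{ld}, reducing everything to the corresponding statement for the free group $F_n$ via the $2$-connectedness of $\iota_{F_n}\colon F_n \to \Acy_n$. The key point is that although $\Acy_n$ is not free, it has the same low-dimensional homological behaviour relative to $\mathcal{K}_{H_1}$ as $F_n$ does, because $\iota_{F_n}$ induces an isomorphism on $H_1$ and $H_1$ is all that enters the coefficient system $\mathcal{K}_{H_1}$.

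First I would recall the Crowell exact sequence (the ``fundamental exact sequence'' of the Fox calculus): for any group $G$ with a homomorphism $G \to H_1$ to an abelian group used to form coefficients, there is a short exact sequence of right $\mathbb{Z}[H_1]$-modules
\[
0 \longrightarrow I(G)\otimes_{\mathbb{Z}[G]}\mathbb{Z}[H_1]
\longrightarrow (\mathbb{Z}[H_1])^{?}
\longrightarrow \mathbb{Z}[H_1]\longrightarrow \mathbb{Z}\longrightarrow 0,
\]
coming from the bar/standard resolution; more precisely $I(G)\otimes_{\mathbb{Z}[G]}\mathbb{Z}[H_1]\cong H_1$ of the pair built from $K(G,1)$ with the local system $\mathbb{Z}[H_1]$, relative to a point. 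For $G=F_n$ this module is free of rank $n$ with basis $(\gamma_i^{-1}-1)\otimes 1$, and the Fox derivatives $\partial/\partial\gamma_i$ furnish the inverse isomorphism — this is exactly \cite[Proposition 1.1]{ld}. So the statement to prove is that the map induced by $\iota_{F_n}$,
\[
I(F_n)\otimes_{\mathbb{Z}[F_n]}\mathcal{K}_{H_1}
\longrightarrow I(\Acy_n)\otimes_{\mathbb{Z}[\Acy_n]}\mathcal{K}_{H_1},
\]
is an isomorphism of right $\mathcal{K}_{H_1}$-modules; composing with the free-group case of \cite[Proposition 1.1]{ld} then gives the result, and one checks the composite sends the basis as claimed.

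To prove that induced map is an isomorphism, I would interpret both sides homologically. Writing $\overline{G}=\Acy_n$ or $F_n$, the module $I(G)\otimes_{\mathbb{Z}[G]}\mathcal{K}_{H_1}$ is canonically $H_1(G; \mathcal{K}_{H_1})$ relative to the augmentation — concretely, $I(G)\otimes_{\mathbb{Z}[G]}\mathbb{Z}[H_1]$ sits in the exact sequence $0\to H_1(G;\mathbb{Z}[H_1])\to I(G)\otimes_{\mathbb{Z}[G]}\mathbb{Z}[H_1]\to \mathbb{Z}[H_1]\xrightarrow{\varepsilon}\mathbb{Z}\to 0$, and after tensoring with the field $\mathcal{K}_{H_1}$ the term $H_0$-type contribution from $\mathbb{Z}[H_1]\xrightarrow{\varepsilon}\mathbb{Z}$ becomes an isomorphism $\mathcal{K}_{H_1}\to 0$... so in fact $I(G)\otimes_{\mathbb{Z}[G]}\mathcal{K}_{H_1}\cong H_1(G;\mathcal{K}_{H_1})$ once $H_1$ is infinite (so $\mathbb{Z}\otimes\mathcal{K}_{H_1}=0$); here $n\ge 2$ guarantees $H_1=\mathbb{Z}^n\neq 0$ and $\mathcal{K}_{H_1}$ is a genuine field of fractions, which is where the hypothesis $n\ge 2$ enters (actually $n\ge1$ suffices for this, but $n\ge2$ is assumed throughout). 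Now $\iota_{F_n}$ is $2$-connected by Proposition \ref{prop:2cn}, hence induces an isomorphism on $H_1(-;\mathbb{Z})$ and an epimorphism on $H_2(-;\mathbb{Z})$; since the coefficient module $\mathcal{K}_{H_1}$ is pulled back along the \emph{same} abelianization $\overline{G}\to H_1$ on both sides (compatibly, as $\iota_{F_n}$ is the identity on $H_1$), a standard comparison — e.g. the five-term exact sequence of the Lyndon–Hochschild–Serre spectral sequence for the kernel $N=\Ker(\iota_{F_n})$, whose $H_1(N)$ vanishes after the relevant coinvariants because $\iota_{F_n}$ is $2$-connected — shows the induced map on $H_1(-;\mathcal{K}_{H_1})$ is an isomorphism.

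The step I expect to be the main obstacle is this last comparison: $\Ker(\iota_{F_n})$ need not be trivial (indeed $\iota_{F_n}$ fails to be injective in general, though it is injective for free groups by residual nilpotence as noted in the text), so one cannot argue naively. The cleanest route is probably to avoid the spectral sequence and instead note that $\Acy_n$ is a directed union (direct limit) of finitely presented groups $G_\alpha$ through which $\iota_{F_n}$ factors, with each $F_n\to G_\alpha$ a $2$-connected map; applying \cite[Proposition 1.1]{ld}-type reasoning together with the fact that $I(-)\otimes\mathcal{K}_{H_1}$ (equivalently $H_1(-;\mathcal{K}_{H_1})$) is insensitive to $2$-connected maps between finitely presented groups — which one can check by hand from a presentation, as a $2$-connected map can be realized by adding generators and relations that contribute only acyclically to the relevant chain complex after tensoring with $\mathcal{K}_{H_1}$ — and then passing to the limit, using that tensor product and the relevant homology commute with direct limits. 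This mirrors exactly how the acyclic closure is built in Section \ref{sec:acyclic} from iterated acyclic systems, so the bookkeeping, while delicate, parallels machinery already in place; the routine verification that the resulting isomorphism carries $(a_1,\dots,a_n)^T$ to $\sum_i(\gamma_i^{-1}-1)\otimes a_i$ I would leave to direct inspection of the Fox-derivative formulas.
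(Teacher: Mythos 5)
Your overall plan --- reduce to Le Dimet's free-group case by showing $\iota_{F_n}$ induces an isomorphism $I(F_n)\otimes_{\Z[F_n]}\mathcal{K}_{H_1}\to I(\Acy_n)\otimes_{\Z[\Acy_n]}\mathcal{K}_{H_1}$, and to establish that isomorphism by writing $\Acy_n$ as a directed union of groups obtained from $F_n$ by iterated acyclic systems and arguing one system at a time --- is indeed the right one, and it is essentially Le Dimet's argument to which the paper defers. But two of your intermediate steps have problems, and the one you correctly flag as the ``main obstacle'' is left as an assertion rather than proved, when in fact the proof is short and is the whole content of the statement.

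First, your Crowell-sequence identification is off. Tensoring $0\to I(G)\to \Z[G]\to\Z\to 0$ over $\Z[G]$ with $\mathcal{K}_{H_1}$ (pulled back through $G\to H_1$) yields
\[
0\longrightarrow H_1(G;\mathcal{K}_{H_1})\longrightarrow I(G)\otimes_{\Z[G]}\mathcal{K}_{H_1}\longrightarrow \mathcal{K}_{H_1}\longrightarrow H_0(G;\mathcal{K}_{H_1})\longrightarrow 0,
\]
and $H_0(G;\mathcal{K}_{H_1})=\mathcal{K}_{H_1}/I(H_1)\mathcal{K}_{H_1}=0$; this gives $I(G)\otimes\mathcal{K}_{H_1}\cong H_1(G;\mathcal{K}_{H_1})\oplus\mathcal{K}_{H_1}$, not $I(G)\otimes\mathcal{K}_{H_1}\cong H_1(G;\mathcal{K}_{H_1})$ as you write. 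This does not sink the plan (a map inducing an isomorphism on $H_1(-;\mathcal{K}_{H_1})$ still gives an isomorphism on $I(-)\otimes\mathcal{K}_{H_1}$ by the five lemma), but the stated isomorphism is wrong. Second, the Lyndon--Hochschild--Serre option cannot work: as you yourself note a few lines later, $\iota_{F_n}\colon F_n\to\Acy_n$ is \emph{injective} (by residual nilpotence of $F_n$), so $\Ker(\iota_{F_n})=1$ and there is no extension to feed into the spectral sequence; nor is $\iota_{F_n}$ surjective, so there is no quotient to use either. That route should simply be dropped.

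The real gap is the sentence ``a $2$-connected map can be realized by adding generators and relations that contribute only acyclically to the relevant chain complex after tensoring with $\mathcal{K}_{H_1}$.'' This is precisely what needs to be proved, and abstract $2$-connectedness is not the mechanism that does it; the specific shape of an acyclic system is. Concretely: if $G_{\alpha+1}=\bigl(G_\alpha\ast\langle x_1,\dots,x_m\rangle\bigr)/\langle\!\langle x_1w_1^{-1},\dots,x_mw_m^{-1}\rangle\!\rangle$ with each $w_i$ acyclic, then the relation module of $G_{\alpha+1}$ relative to $G_\alpha$ is presented, after $\otimes\,\mathcal{K}_{H_1}$, by the $m\times m$ block $\left(\overline{\partial(x_iw_i^{-1})/\partial x_j}\right)$. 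The acyclicity of the $w_i$ says exactly that each $w_i$ has zero $x_j$-exponent sum for every $j$, so this block maps to the identity matrix $I_m$ under the trivializer $\mathcal{K}_{H_1}\to\Q$; in particular its determinant is a nonzero element of $\Z[H_1]$ and hence a unit of $\mathcal{K}_{H_1}$. Invertibility of that block is what lets you eliminate the new generators $(x_j^{-1}-1)\otimes 1$ in terms of the old ones and shows the induced map $I(G_\alpha)\otimes\mathcal{K}_{H_1}\to I(G_{\alpha+1})\otimes\mathcal{K}_{H_1}$ is an isomorphism. Passing to the colimit then gives $I(\Acy_n)\otimes_{\Z[\Acy_n]}\mathcal{K}_{H_1}\cong I(F_n)\otimes_{\Z[F_n]}\mathcal{K}_{H_1}\cong\mathcal{K}_{H_1}^n$ with the stated basis, and the map $\chi$ is visibly the composite. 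Note that it is this determinant computation --- not $2$-connectedness in the abstract, and certainly not any spectral sequence --- that constitutes the proof, and your proposal never actually carries it out.
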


\begin{definition}
For $1 \le i \le n$, the {\it extended free derivative} 
\[\frac{\partial}{\partial \gamma_i}:\Acy_n 
\longrightarrow \mathcal{K}_{H_1}\]
with respect to $\gamma_i$ is the map assigning to 
$v \in \Acy_n$ the $i$-th component of \,
$\overline{\chi^{-1} ((v^{-1}-1) \otimes 1)} \in \mathcal{K}_{H_1}$.
\end{definition}

In Le Dimet \cite[Proposition 1.3]{ld}, the formulas 
\[\frac{\partial \gamma_j}{\partial \gamma_i}=\delta_{i,j}, \qquad 
\frac{\partial (gh)}{\partial \gamma_i}=
\frac{\partial g}{\partial \gamma_i}+g\frac{\partial h}{\partial \gamma_i}, 
\qquad 
\frac{\partial g^{-1}}{\partial \gamma_i}=-g^{-1} 
\frac{\partial g}{\partial \gamma_i}\]
for $g,h \in \Acy_n$ are given. 
By them, we see that 
the extended free derivatives 
coincide with the original ones 
if we restrict them to $F_n$. 
\begin{definition}\label{def:alternative}
The {\it Magnus representation} for $\Aut (\Acy_n)$ is the map 
\[r:\Aut (\Acy_n) \to M(n,\mathcal{K}_{H_1})\]
assigning to $\varphi \in \Aut (\Acy_n)$ the matrix 
\[r(\varphi):=\left(\overline{\left(\frac{\partial \varphi(\gamma_j)}
{\partial \gamma_i} \right)}\right)_{i,j}.\]
\end{definition}
It is not difficult to check that 
the Magnus representation $r$ is a crossed homomorphism and 
hence the image of $r$ is included in the set 
$\mathrm{GL}(n,\mathcal{K}_{H_1})$. 
When we compute the Magnus matrix $r(\mathrm{Acy} (M))$ 
for a homology cylinder $M \in \mathcal{H} (X_n^k -\mathrm{Int}\, D^k)$, 
we shall meet the same formula 
as Proposition \ref{prop:MagnusFormula}. This shows that 
Definition \ref{def:alternative} gives an alternative 
description of the Magnus representation. 
\begin{example}\label{ex:twoconn}
Let $f$ be a $2$-connected endomorphism of $F_n$. 
By Proposition \ref{prop:isom}, the endomorphism 
$f$ is uniquely extended to an automorphism $f^{\mathrm{acy}}$ of 
$\Acy_n$. In this case, the Magnus matrix $r(f^{\mathrm{acy}})$ 
is obtained by applying the original free derivatives to $f$. 
\end{example}

Consider the composition
\[\widetilde{r}:
\Aut (\Acy_n) 
\stackrel{r}{\longrightarrow} 
\mathrm{GL}(n,\mathcal{K}_{H_1}) 
\xrightarrow{\det} 
\mathcal{K}_{H_1}^\times 
\longrightarrow 
\mathcal{K}_{H_1}^\times/(\pm H_1 \cdot A') \cong \Z^\infty,\]
where $A':=\{f^{-1} \cdot \varphi (f) \mid 
f \in \mathcal{K}_{H_1}^\times, \ 
\varphi \in \Aut (H_1)\}$ as before. 
The map $\widetilde{r}$ 
is a homomorphism. 

\begin{theorem}\label{thm:H1Acy}
For any $n \ge 2$, we have:
\begin{itemize}
\item[$(1)$] The image of the homomorphism $\widetilde{r}$ 
is an infinitely generated subgroup of $\mathbb{Z}^\infty$. In particular, 
$H_1 (\mathrm{Aut}\, (F^{\mathrm{acy}}_n))$ has 
infinite rank. 

\item[$(2)$] The homomorphism $\widetilde{r}$ factors through 
$\mathrm{Out}\, (F^{\mathrm{acy}}_n)$. Therefore 
$H_1 (\mathrm{Out}\, (F^{\mathrm{acy}}_n))$ has infinite rank. 
\end{itemize}
\end{theorem}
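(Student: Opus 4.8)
The plan is to produce, for each $n \ge 2$, an explicit infinite family of automorphisms of $\Acy_n$ whose images under $\widetilde r$ are $\Z$-linearly independent in $\mathcal{K}_{H_1}^\times/(\pm H_1 \cdot A')$, and then to check that the construction survives passage to $\mathrm{Out}(\Acy_n)$. The natural source of such automorphisms is Example \ref{ex:twoconn}: any $2$-connected endomorphism $f$ of $F_n$ extends uniquely to $f^{\mathrm{acy}} \in \Aut(\Acy_n)$, and on these elements the Magnus matrix is computed by the ordinary Fox derivatives. So the first step is to take, for $m \ge 0$, an endomorphism $f_m$ of $F_n$ that is $2$-connected (it suffices that the induced map on $H_1 \cong \Z^n$ be an isomorphism, together with the homological condition on $H_2$, which is automatic for free groups since $H_2(F_n)=0$) and whose Jacobian determinant $\det r(f_m^{\mathrm{acy}}) \in \Z[H_1] \subset \mathcal{K}_{H_1}^\times$ realizes an irreducible Laurent polynomial whose $\Aut(H_1)$-orbit-and-$\pm H_1$-class varies with $m$. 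Concretely, I would mimic the computation in Section \ref{sec:abelian}: use the string-link-type homology cylinder $M_L$ of Example \ref{ex:string_comp} (which for $k=2$ already gives $\det r(M_L)$ a ratio of the desired shape), or directly write down an endomorphism of $F_2 \subset F_n$ sending $\gamma_1 \mapsto \gamma_1[\gamma_1,\gamma_2]\cdots$ so that $\det r$ becomes, say, $\gamma_2^m\gamma_1 + (\text{lower-degree terms in }\gamma_1)$ — a polynomial that is irreducible because it is degree one in $\gamma_1$ with monomial leading coefficient, exactly as in the proof of Theorem in Section \ref{sec:abelian}.

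The second step is the linear independence. Here I would reuse verbatim the argument already given for the surface case: the element of $\mathcal{K}_{H_1}^\times/(\pm H_1 \cdot A')$ determined by a Laurent polynomial is recorded by the exponents of the $\Aut(H_1)$-orbits of its irreducible factors, modulo the relation $f \sim \varphi(f)$; and $\Aut(H_1)$ preserves both irreducibility and the number of monomials. So it suffices to arrange the family $\{\det r(f_m^{\mathrm{acy}})\}_{m \ge 0}$ so that the polynomials (up to units, up to $\Aut(H_1)$, and up to the involution) are pairwise distinct — for instance by controlling the ratios among monomials, which are $\pm H_1$-invariant but genuinely change with $m$. That these ratios distinguish the classes is exactly the computation ``by considering the ratios among monomials'' used in Section \ref{sec:abelian}; I would transcribe it. This yields part $(1)$: the images $\widetilde r(f_m^{\mathrm{acy}})$ generate an infinitely generated subgroup of $\Z^\infty$, hence $H_1(\Aut(\Acy_n))$ has infinite rank (a homomorphism onto an infinitely generated abelian group forces infinite rank of the abelianization).

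For part $(2)$, I must show $\widetilde r$ kills $\mathrm{Inn}(\Acy_n)$, i.e. that $\det r$ is trivial in $\mathcal{K}_{H_1}^\times/(\pm H_1 \cdot A')$ on inner automorphisms. If $\varphi = \mathrm{conj}_g$ then $\varphi(\gamma_j) = g\gamma_j g^{-1}$, and the derivative formulas give $\partial(g\gamma_j g^{-1})/\partial\gamma_i = \partial g/\partial\gamma_i + g\,\delta_{i,j} - g\gamma_j g^{-1}\,\partial g/\partial\gamma_i$, so $r(\mathrm{conj}_g) = gI_n + (\text{column }v)(\text{row }w)$ for appropriate $v,w$; taking the determinant (using $\det(gI + vw^T) = g^{n-1}(g + w^T v)$) gives a determinant of the form $h^{-1}\cdot(\text{unit in }\Z[H_1])$ times a power of the abelianization of $g$ — in any case an element which, because it depends only on the image of $g$ in $H_1$, lies in the $\Aut(H_1)$-saturation $\pm H_1 \cdot A'$ and is therefore trivial. (Alternatively, one may note $\widetilde r$ is a homomorphism $\Aut(\Acy_n) \to \Z^\infty$ and $\mathrm{Inn}(\Acy_n)$ is generated by conjugations which map to $0$ by the direct computation.) Then $\widetilde r$ descends to $\mathrm{Out}(\Acy_n)$, and since the family $f_m^{\mathrm{acy}}$ still has infinitely generated image downstairs (the images were already independent in $\Z^\infty$), $H_1(\mathrm{Out}(\Acy_n))$ has infinite rank as well.

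The main obstacle I anticipate is not the linear-independence bookkeeping — that is essentially copied from Section \ref{sec:abelian} — but pinning down a clean explicit family of $2$-connected endomorphisms of $F_n$ (for every $n \ge 2$, by stabilizing the $n=2$ case) whose Fox Jacobian determinants are manifestly irreducible and manifestly inequivalent mod $\pm H_1 \cdot A'$; and verifying carefully that the extension-to-$\Acy_n$ step (Proposition \ref{prop:isom} plus Example \ref{ex:twoconn}) does not collapse the distinctions, so that the combinatorial invariant computed for $F_n$ really is the invariant computed in $\mathcal{K}_{H_1}$ for $\Acy_n$. Everything else is routine.
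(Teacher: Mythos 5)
Your proposal has the right \emph{framework} for part (1) --- use Example \ref{ex:twoconn} to manufacture automorphisms of $\Acy_n$ from $2$-connected endomorphisms of $F_n$ (note $H_2(F_n)=0$, so $2$-connectedness reduces to being an isomorphism on $H_1$), compute $\det r$ by ordinary Fox calculus, and exhibit infinitely many inequivalent irreducible determinants --- and this is exactly the route the paper takes. But you stop short of producing and verifying a concrete family, and you flag this yourself as ``the main obstacle''; that is indeed where the content lies. The paper's choice is
\[
f_m(\gamma_1) = (\gamma_1 \gamma_2^{-1} \gamma_1^{-1} \gamma_2^{-1})^m \gamma_1 \gamma_2^{2m}, \qquad f_m(\gamma_i)=\gamma_i \ (i \ge 2),
\]
a visibly $2$-connected endomorphism whose Fox Jacobian is lower-triangular with $(1,1)$-entry $1 - \gamma_2 + \gamma_2^2 - \cdots + \gamma_2^{2m}$; for $2m+1$ prime this is a cyclotomic polynomial in $-\gamma_2$, hence irreducible, and independence in $\mathcal{K}_{H_1}^\times/(\pm H_1 \cdot A')$ falls out immediately because the degrees are distinct. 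Your proposed shape ($\gamma_2^m\gamma_1 + \text{lower terms}$, distinguished ``by ratios among monomials'' as in Section \ref{sec:abelian}) would presumably also work, but you have not exhibited an endomorphism realizing it nor checked that the resulting classes remain distinct once the $\Aut(H_1)$-action and the involution are quotiented out --- and these are not free passes (e.g.\ for $\Aut(H_1)$ rather than $\mathrm{Sp}$, one has more freedom to move polynomials around). So part (1) as written has a genuine gap, though a repairable one.

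For part (2) you take a different and valid route from the paper. You compute $\det r(\mathrm{conj}_g)$ directly via the extended Fox product rule and the matrix determinant lemma, obtaining a power of the abelianization of $g$, hence an element of $\pm H_1$. This works (the sum $\sum_j(1-\gamma_j)\,\partial g/\partial\gamma_j$ collapses to $1-g$ by the fundamental Fox identity), though you should note that what makes it rank-one after abelianization is that $g\gamma_j g^{-1}$ becomes $\gamma_j$. The paper instead avoids the calculation entirely by a universal-property argument: the composite $\Psi: \Acy_n \twoheadrightarrow \mathrm{Inn}(\Acy_n) \hookrightarrow \Aut(\Acy_n) \xrightarrow{\widetilde r} \mathcal{K}_{H_1}^\times/(\pm H_1 \cdot A')$ restricts to the trivial map on $F_n$ (since $\det$ of the Fox Jacobian of an $F_n$-automorphism is a unit of $\Z[H_1]$), the target is abelian hence AC, and uniqueness in Proposition \ref{prop:universality} forces $\Psi \equiv 1$. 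Your computation is more explicit and arguably more informative; the paper's is shorter and sidesteps convention-chasing with the involution. Both are fine.
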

\begin{proof}
$(1)$ Consider a homomorphism $f_m: F_n \to F_n$ 
defined by 
\[f_m(\gamma_1) = 
(\gamma_1 \gamma_2^{-1} \gamma_1^{-1} \gamma_2^{-1})^m 
\gamma_1 \gamma_2^{2m}, \qquad
f_m(\gamma_i) = \gamma_i \ (2 \le i \le n) \]
for each $m \ge 1$. The homomorphism $f_m$ is $2$-connected 
and therefore it induces an automorphism $f_m^{\mathrm{acy}}$ 
of $F^{\mathrm{acy}}_n$. 
By using the original free derivatives (see Example \ref{ex:twoconn}), 
we see that the Magnus matrix $r(f_m^{\mathrm{acy}})$ is given by 
the lower triangular matrix 
\[\begin{pmatrix}
\begin{array}{cc}
1 -\gamma_2 +\gamma_2^2-\gamma_2^3+ \cdots 
+ \gamma_2^{2m} & 0 \\
\ast & 1 
\end{array} & 0_{(2,n-2)}\\
\qquad \quad 0_{(n-2,2)} & I_{n-2}
\end{pmatrix}.\] 
Therefore we have 
\begin{align*}
\widetilde{r} (f_m^{\mathrm{acy}}) &= 
1 -\gamma_2 +\gamma_2^2-\gamma_2^3+ \cdots 
+ \gamma_2^{2m}\\
&=1 +(-\gamma_2) +(-\gamma_2)^2+(-\gamma_2)^3+ \cdots 
+ (-\gamma_2)^{2m}.
\end{align*}
\noindent
By a well known fact on the cyclotomic 
polynomials, we see that 
the polynomial $\widetilde{r} (f_m^{\mathrm{acy}})$ is 
irreducible if $2m+1$ is prime. 
Moreover, the polynomials 
$\{\widetilde{r} (f_m^{\mathrm{acy}}) \mid 
\mbox{$2m+1$ is prime}\}$ 
are independent in the module 
$\mathcal{K}_{H_1}^\times/(\pm H_1\cdot A')$ because 
their degrees are distinct. 
Therefore the claim for $H_1 (\mathrm{Aut}\, (F^{\mathrm{acy}}_n))$ 
follows. 

$(2)$ It suffices to show that the composition
\[\Psi: F_n^{\mathrm{acy}} \twoheadrightarrow 
\mathrm{Inn}\, (F^{\mathrm{acy}}_n) 
\hookrightarrow 
\mathrm{Aut}\, (F^{\mathrm{acy}}_n) 
\stackrel{\widetilde{r}}{\longrightarrow} 
\mathcal{K}_{H_1}^\times/(\pm H_1\cdot A')\]
is trivial. The restriction of the homomorphism $\Psi$ to $F_n$ 
is trivial because 
the determinant of the Magnus matrix of an automorphism of $F_n$ is in $H_1$. 
Therefore $\Psi$ is an extension of the trivial map from $F_n$ to 
the abelian group $\mathcal{K}_{H_1}^\times/(\pm H_1\cdot A')$ which is AC. 
Then by Proposition \ref{prop:universality}, 
we see that the map $\Psi$ is also trivial. 
\end{proof}
\begin{proof}[Proof of Theorem $\ref{thm:acyMag}$] 
The Magnus representation mentioned in Section \ref{sec:higher} 
is the composition of the homomorphism $\mathrm{Acy}$ and the above $\widetilde{r}$. 
Therefore our claims immediately follow from Theorems \ref{thm:acysurj} 
and \ref{thm:H1Acy}. 
\end{proof}

The isomorphisms $f_m$ in the proof of Theorem \ref{thm:H1Acy} can also 
be used to show the following.  
\begin{theorem}\label{thm:infinite}
The acyclic closure $F_n^{\mathrm{acy}}$ is not finitely 
generated for any $n \ge 2$. 
\end{theorem}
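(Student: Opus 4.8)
The plan is to argue by contradiction, exploiting the extended free derivatives of Section~\ref{sec:magnus2}: a finite generating set for $\Acy_n$ would force the denominators of \emph{all} such derivatives to be controlled by a single polynomial. So suppose $\Acy_n=\langle g_1,\dots,g_N\rangle$ and let $R\subseteq\mathcal{K}_{H_1}$ be the $\Z$-subalgebra generated by $\Z[H_1]$ together with the finitely many elements $\partial g_j/\partial\gamma_i$ $(1\le i\le n,\ 1\le j\le N)$. Writing an arbitrary $v\in\Acy_n$ as a word in the $g_j^{\pm1}$ and applying repeatedly Le Dimet's Leibniz rules (the three formulas quoted before Definition~\ref{def:alternative}), one sees that $\partial v/\partial\gamma_i$ is an $H_1$-linear combination of the $\partial g_j/\partial\gamma_i$ whose coefficients are images in $H_1\subseteq\Z[H_1]$ of initial subwords of $v$; hence $\partial v/\partial\gamma_i\in R$ for every $v$ and every $i$. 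Since $R$ is a finitely generated $\Z$-algebra contained in the field $\mathcal{K}_{H_1}=\mathrm{Frac}(\Z[H_1])$, there is a single $s\in\Z[H_1]\setminus\{0\}$ with $R\subseteq\Z[H_1][s^{-1}]$; thus, in the unique factorisation domain $\Z[H_1]$, the reduced denominator of every extended free derivative of every element of $\Acy_n$ has all of its irreducible factors among the finitely many irreducible factors of $s$.

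Next I would manufacture elements whose derivatives defeat this bound. Assume $n\ge2$ (for $n=1$ one has $\Acy_1=F_1\cong\Z$, already finitely generated). For each $m\ge1$ consider the one-variable acyclic system over $\Acy_n$
\[x=\gamma_1\,\gamma_2^{\,m}\,x\,\gamma_2^{-m}\,x^{-1},\]
whose right-hand side lies in $\Ker\!\left(\Acy_n\ast\langle x\rangle\to\langle x\rangle\to H_1(\langle x\rangle)\right)$; since $\Acy_n$ is acyclically closed, it has a unique solution $x_m\in\Acy_n$. Abelianising the defining relation gives $\widehat{x_m}=\gamma_1\in H_1$, and then differentiating the relation with respect to $\gamma_1$ and solving the resulting linear equation yields
\[\frac{\partial x_m}{\partial\gamma_1}=\frac{1}{1+\gamma_1-\gamma_1\gamma_2^{\,m}}=:\frac{1}{q_m}.\]
A routine check shows each $q_m$ is irreducible in $\Z[H_1]$ (in the polynomial ring $\Z[\gamma_1,\dots,\gamma_n]$ it is a primitive polynomial of degree $1$ in $\gamma_1$ over $\Z[\gamma_2]$, and it is divisible there by no $\gamma_k$), and that the $q_m$ are pairwise non-associate: the support of $q_m$ in $\Z^n$ is $\{0,\,e_1,\,e_1+m e_2\}$, and translating it by the exponent vector of a monomial unit cannot produce the support of $q_{m'}$ unless $m=m'$.

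Combining the two parts finishes the proof: each of the infinitely many pairwise non-associate irreducibles $q_m$ is the reduced denominator of $\partial x_m/\partial\gamma_1$, hence must divide a power of the fixed polynomial $s$, which is impossible. Therefore $\Acy_n$ is not finitely generated for any $n\ge2$. The conceptual crux — that finitely many generators bound the denominators of all extended free derivatives — is immediate from the Leibniz rules; the step that actually requires care is the construction of the family $\{x_m\}$, namely choosing acyclic systems whose solutions provably contribute a genuinely new irreducible polynomial to the denominator of a derivative each time, after which the argument is bookkeeping in $\Z[H_1]$ together with the closure property of $\Acy_n$.
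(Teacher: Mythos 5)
Your proof is correct, and it follows the same contradiction scheme as the paper (a finite generating set forces all extended free derivatives into a localisation $\Z[H_1][s^{-1}]$ of the Laurent ring, which caps the set of irreducible factors appearing in denominators; then one exhibits elements of $\Acy_n$ whose derivatives produce infinitely many non-associate irreducible denominators). Where you differ is in how the bad elements are produced, and the difference is worth noting. The paper reuses the automorphisms $f_m^{\mathrm{acy}}$ from the proof of Theorem~\ref{thm:H1Acy}: it takes $(f_m^{\mathrm{acy}})^{-1}(\gamma_1)\in\Acy_n$ and reads the denominator $1-\gamma_2+\cdots+\gamma_2^{2m}$ off the inverse of the Magnus matrix already computed there, so the argument is parasitic on the Magnus-representation machinery and on the choice $2m+1$ prime to guarantee irreducibility (via cyclotomic polynomials). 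You instead build elements $x_m$ directly as the unique solutions of the one-variable acyclic systems $x=\gamma_1\gamma_2^{m}x\gamma_2^{-m}x^{-1}$; the AC property produces $x_m\in\Acy_n$ outright, and differentiating the defining relation via the Leibniz rules gives a linear equation whose solution is $\partial x_m/\partial\gamma_1 = (1+\gamma_1-\gamma_1\gamma_2^{m})^{-1}$, with $q_m=1+\gamma_1-\gamma_1\gamma_2^{m}$ manifestly irreducible (degree $1$ in $\gamma_1$, constant term a unit) and pairwise non-associate by the support argument. This buys two things: the irreducibility is elementary and works for every $m$, with no appeal to cyclotomic polynomials or to a primality restriction; and the argument is self-contained, making no use of the Magnus representation of $\Aut(\Acy_n)$ or of Theorem~\ref{thm:H1Acy}, but instead exhibiting the AC closure property itself as the engine that forces $\Acy_n$ to be infinitely generated. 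What it costs is a small extra verification that $\gamma_1\gamma_2^{m}x\gamma_2^{-m}x^{-1}$ is an acyclic word (which it is, with $x$-exponent sum $0$) and the one-step derivative computation, both of which you carry out correctly.
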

\begin{proof}
Suppose $F_n^{\mathrm{acy}}$ had a finite generating set 
$\{g_1, g_2,\ldots,g_l\}$. 
Then the above formulas for the extended free derivatives 
imply that the image of the derivative 
$\frac{\partial }{\partial \gamma_i}$ 
for each $i$ is in the subring $R$ 
of $\mathcal{K}_{H_1}$ obtained from $\mathbb{Z}[H_1]$ by 
adding $\left\{ \frac{\partial g_1}{\partial \gamma_i}, 
\frac{\partial g_2}{\partial \gamma_i}, 
\ldots, \frac{\partial g_l}{\partial \gamma_i}\right\}$. 
In particular, there are only finitely many irreducible polynomials 
which appear as factors of the denominators of 
reduced expressions for elements in $R$. However, for 
the automorphism $f_m^{\mathrm{acy}}$ with $2m+1$ prime 
constructed in the proof of Theorem \ref{thm:H1Acy}, 
the $(1,1)$-entry of $\widetilde{r}((f_m^{\mathrm{acy}})^{-1})=
(\widetilde{r}(f_m^{\mathrm{acy}}))^{-1}$ is 
\[\overline{\left(\frac{\partial (f_m^{\mathrm{acy}})^{-1}(\gamma_1)}
{\partial \gamma_1} \right)}=
\frac{1}{1 -\gamma_2 +\gamma_2^2-\gamma_2^3+ \cdots 
+ \gamma_2^{2m}}.\]
This contradicts to the property of $R$ just mentioned. 
\end{proof}
\begin{remark}
It is easy to see that the derivative 
$\displaystyle\frac{\partial}{\partial \gamma_i}:\Acy_n 
\to \mathcal{K}_{H_1}$ 
factors through the metabelian quotient 
\[\Acy_n/[[\Acy_n,\Acy_n],[\Acy_n,\Acy_n]]\]
of $\Acy_n$. Our proof of Theorem \ref{thm:infinite} shows 
that this metabelian quotient is also infinitely generated 
for any $n \ge 2$, and therefore 
it is not isomorphic to that of $F_n$. This fact contrasts with 
the nilpotent quotients of $F_n$ and $\Acy_n$ 
which are isomorphic by Stallings' theorem. 
\end{remark}

As mentioned in Section \ref{sec:acyclic}, 
the acyclic closure of a group is a variation of the algebraic closure of a group.  
The argument in this section can be applied to 
the algebraic closure $\Alg_n$ of a free group $F_n$. 
In fact, as shown in \cite[Proposition 5]{le1},  
an automorphism of $\Alg_n$ is induced from a {\it normally surjective} $2$-connected 
endomorphism of $F_n$. The $2$-connected endomorphisms $f_m$ of $F_n$ 
constructed in the proof of Theorem \ref{thm:H1Acy} are normally surjective. Indeed,  
$\gamma_1=(\gamma_1 \gamma_2^{-1} \gamma_1^{-1} \gamma_2^{-1})^{-m}  
f_m(\gamma_1) \gamma_2^{-2m}$ is in the normal closure of the image of $f_m$ 
because $\gamma_1 \gamma_2^{-1} \gamma_1^{-1} \gamma_2^{-1}=
(\gamma_1 \gamma_2^{-1} \gamma_1^{-1}) \gamma_2^{-1}$ is in it. Thus $f_m$ induces 
an automorphism $f_m^{\mathrm{alg}}$ of $\Alg_n$. 
By Le Dimet's original construction, we can define the Magnus representation for 
$\mathrm{Aut}\, (F_n^{\mathrm{alg}})$. 
The remaining argument goes parallel to the case of 
$\Acy_n$. Consequently, we have the following.

\begin{theorem}\label{thm:H1Alg}
For any $n \ge 2$, we have:

$(1)$ $H_1 (\mathrm{Aut}\, (F_n^{\mathrm{alg}}))$ and 
$H_1 (\mathrm{Out}\, (F_n^{\mathrm{alg}}))$ have infinite rank. 

$(2)$ The algebraic closure $F_n^{\mathrm{alg}}$ and its metabelian quotient 
$\Alg_n/[[\Alg_n,\Alg_n],[\Alg_n,\Alg_n]]$ 
are not finitely generated. 
\end{theorem}

%%%%%%%%%%%%%%%%%%%%%%%%%%%%%%%%%%%%%%%%%%%%%%%%
% References 
%%%%%%%%%%%%%%%%%%%%%%%%%%%%%%%%%%%%%%%%%%%%%%%%

\end{document}